\numberwithin{equation}{section}
\theoremstyle{definition}
\newtheorem{Def}{Definition}[section]
\newtheorem{pro}[Def]{Problem}
\newtheorem{rmk}[Def]{Remark}
\newtheorem{ex}[Def]{Example}
\theoremstyle{plain}
\newtheorem{thm}[Def]{Theorem}
\newtheorem{cor}[Def]{Corollary}
\newtheorem{lem}[Def]{Lemma}
\begin{document}
\title{A New Feasibility Condition for the AT4 Family}

\author[a]{Zheng-Jiang Xia}
\author[b]{Jae-Ho Lee }
\author[c,d]{Jack H. Koolen 
	{\thanks{Corresponding author. \protect \\
	\indent \ \ 
	E-mail: xzj@mail.ustc.edu.cn (Z.J. Xia), jaeho.lee@unf.edu (J.-H. Lee), koolen@ustc.edu.cn (J.H. Koolen)}}}

\affil[a]{\small School of Finance, Anhui University of Finance \& Economics, Bengbu, 233030, China.}
\affil[b]{\small Department of Mathematics and Statistics, University of North Florida, Jacksonville, FL 32224, USA.}
\affil[c]{\small School of Mathematical Sciences, University of Science and Technology of China, Hefei, Anhui, 230026, PR China.}
\affil[d]{\small CAS Wu Wen-Tsun Key Laboratory of Mathematics, University of Science and Technology of China, 96 Jinzhai Road, Hefei, Anhui, 230026, PR China.}

\date{}

\maketitle

\begin{abstract}
Let $\Gamma$ be an antipodal distance-regular graph with diameter $4$ and eigenvalues $\theta_0>\theta_1>\theta_2>\theta_3>\theta_4$. 
Then $\Gamma$ is tight in the sense of Juri\v{s}i\'{c}, Koolen, and Terwilliger \cite{jkt2000} whenever $\Gamma$ is locally strongly regular with nontrivial eigenvalues $p:=\theta_2$ and $-q:=\theta_3$.
Assume that $\Gamma$ is tight. 
Then the intersection numbers of $\Gamma$ are expressed in terms of $p$, $q$, and $r$, where 
$r$ is the size of the antipodal classes of $\Gamma$.
We denote $\Gamma$ by  $\mathrm{AT4}(p,q,r)$ and call this an antipodal tight graph of diameter $4$ with parameters $p,q,r$.
In this paper, we give a new feasibility condition for the $\mathrm{AT4}(p,q,r)$ family.
We determine a necessary and sufficient  condition for the second subconstituent of $\mathrm{AT4}(p,q,2)$ to be an antipodal tight graph.
Using this condition, we prove that there does not exist $\mathrm{AT4}(q^3-2q,q,2)$ for $q\equiv3$ $(\mathrm{mod}~4)$.
We discuss the $\mathrm{AT4}(p,q,r)$ graphs with $r=(p+q^3)(p+q)^{-1}$.
\bigskip

\noindent
{\bf Keywords:} Distance-regular graph, Antipodal graph, Tight graph, $\mu$-graph 

\medskip
\noindent
{\bf 2020 Mathematics Subject Classification:}   05E30

\end{abstract}

\section{Introduction}
Let $\Gamma$ denote a distance-regular graph with diameter $d\geq 3$.
Let $k=\theta_0>\theta_1>\cdots>\theta_d$ denote the eigenvalues of $\Gamma$.
Juri\v{s}i\'{c} et al. \cite{jkt2000,jk2000} showed that the intersection numbers $a_1, b_1$ of $\Gamma$ satisfy the following inequality
\begin{equation}\label{eq0}
	\left(\theta_1+\frac{k}{a_1+1}\right)\left(\theta_d+\frac{k}{a_1+1}\right)\geq-\frac{ka_1b_1}{(a_1+1)^2},
\end{equation}
and defined $\Gamma$ to be \emph{tight} whenever $\Gamma$ is not bipartite, and equality holds in \eqref{eq0}.
The tight distance-regular graphs have been studied in many papers; see \cite{GT2002, j2003, jk2000, jk2002, jk2007, jk2011, jkt2000, pascasio2001} and also see \cite[Section 6.1]{vDKT}. 
A number of characterizations of the tightness property resulted; for instance, $\Gamma$ is tight if and only if $a_1\ne0$, $a_d=0$, and $\Gamma$ is $1$-homogeneous in the sense of  Nomura \cite{nomura1994}. 
In addition, $\Gamma$ is tight if and only if each local graph of $\Gamma$ is connected strongly regular, with nontrivial eigenvalues $b^+=-1-{b_1}/{(1+\theta_d)}$, and $b^-=-1-{b_1}/{(1+\theta_1)}$; cf. \cite{jkt2000}.
Juri\v{s}i\'{c} and Koolen \cite{jk2000} proved that tight distance-regular graphs with diameter three are precisely Taylor graphs, which are distance-regular graphs with intersection array $\{k,c,1;1,c,k\}$; cf \cite[Section 1.5]{BCN}. 
Moreover, by the results of \cite[Section 7]{jkt2000}, the Terwilliger algebra of a Taylor graph does not give new feasibility conditions. 
For further information on Taylor graphs and their tightness, see \cite[Section 7.6.C]{BCN}, \cite[Section 3]{jk2000}, \cite{jkt2000}, and \cite{ST1981}.

We assume that $\Gamma$ is tight with diameter four.
We further assume that $\Gamma$ is an antipodal $r$-cover.
Let $p$ and $-q$ denote the nontrivial eigenvalues of a local graph of $\Gamma$, where we assume $p>-q$.
Then all intersection numbers and eigenvalues of $\Gamma$ are expressed in terms of $p$, $q$, and $r$; cf. \cite{jk2002}.
We denote the graph $\Gamma$ by $\mathrm{AT4}(p,q,r)$ and call it an \emph{antipodal tight graph} of diameter $4$.
Juri\v{s}i\'{c} et al. \cite{jk2002, jk2000, jkt2000,jk2011,jk2007} have investigated the $\mathrm{AT4}(p,q,r)$ graphs and showed various feasibility conditions for $p$, $q$, and $r$.
Note that the family of antipodal tight graphs $\mathrm{AT4}(sq,q,q)$ are classified; cf. \cite{jk2011}.
Additionally, Koolen et al. \cite{jkt2000} showed that $\mathrm{AT4}(p,q,2)$ is pseudo-vertex-transitive by using its Terwilliger algebra.

In the present paper, we study the $\mathrm{AT}4(p,q,r)$ graphs and give a new feasibility condition for the $\mathrm{AT}4(p,q,r)$ family.
In Section \ref{prelim}, we review some preliminaries concerning the $\mathrm{AT}4(p,q,r)$ graphs.
In Section \ref{sec:NFC} we show a new feasibility condition for the $\mathrm{AT}4(p,q,r)$ graphs; see Theorem \ref{thmcon}. 
The $\mu$-graph will play an important role in this section.
Using the feasibility condition, we show that for a graph $\mathrm{AT}4(qs,q,q)$ we have $s\leq q$.
In Section \ref{sec:AT4 p,q,2} we discuss $\mathrm{AT}4(p,q,2)$ and its second subconstituent $\Delta_2$.
We give a necessary and sufficient condition for the graph $\Delta_2$ to be an antipodal tight graph; see Theorem \ref{thm4.2}.
From this result, we show the nonexistence of $\mathrm{AT}4(q^3-2q,q,2)$ when $q\equiv3~(\mathrm{mod}~4)$. 
In particular, we show that the $\mathrm{AT}4(21,3,2)$ graph does not exist.
The paper ends in Section \ref{sec: case r} with some comments on $\mathrm{AT4}(p,q,r)$ with $r=(p+q^3)(p+q)^{-1}$ and an open problem for $\mathrm{AT}4(p,q,3)$.

\section{Preliminaries}\label{prelim}
In this section, we recall some definitions and results concerning the $\mathrm{AT}4(p,q,r)$ graphs that we need later in the paper.
For more background information we refer the reader to \cite{BCN, vDKT}.
Throughout this section, let $\Gamma$ denote a simple connected graph with vertex set $V(\Gamma)$ and diameter $d$.
For $0\leq i \leq d$ and for $x\in V(\Gamma)$ we set $\Gamma_i(x)=\{y\in V(\Gamma) : \partial(x,y)=i\}$, where $\partial=\partial_{\Gamma}$ denotes the shortest path-length distance function.
For notational convenience, we define $\Gamma_{-1}(x)=\emptyset$ and $\Gamma_{d+1}(x)=\emptyset$.
We abbreviate $\Gamma(x)=\Gamma_1(x)$.
The \emph{$i$-th subconstituent} $\Delta_i(x)$ of $\Gamma$ with respect to $x\in V(\Gamma)$ is the subgraph of $\Gamma$ induced by $\Gamma_i(x)$.
We abbreviate $\Delta(x):=\Delta_1(x)$ and call this the \emph{local graph} of $\Gamma$ at $x$.
We say that $\Gamma$ is \emph{locally} $\Delta$ whenever all local graphs of $\Gamma$ are isomorphic to $\Delta$.
We say that $\Gamma$ is \emph{regular with valency $k$} (or $k$-regular) whenever  $|\Gamma(x)| = k$ for all $x\in V(\Gamma)$.
We say that $\Gamma$ is \emph{distance-regular} whenever for all integers $0\leq i \leq d$ and for all vertices $x,y \in V(\Gamma)$ with $\partial(x,y)=i$, the numbers
\begin{equation*}
	a_i = |\Gamma_i(x) \cap \Gamma(y) |, \qquad
	b_i = |\Gamma_{i+1}(x) \cap \Gamma(y) |, \qquad
	c_i = |\Gamma_{i-1}(x) \cap \Gamma(y) |
\end{equation*}
are independent of $x,y$, where we define $b_{d}:=0$ and $c_0:=0$.
Observe that $\Gamma$ is regular with valency $k=b_0$ and $a_i+b_i+c_i=k$ for $0\leq i \leq d$.
The array $\{b_0, b_1, \ldots, b_{d-1}; c_1, c_2, \ldots, c_d\}$ is called the \emph{intersection array of $\Gamma$}.

Suppose that $\Gamma$ is $k$-regular with $n$ vertices.
We say that $\Gamma$ is \emph{strongly regular} with parameters $(n,k,a,c)$ whenever each pair of adjacent vertices has the same number $a$ of common neighbors, and each pair of distinct non-adjacent vertices has the same number $c$ of common neighbors.
Note that a connected strongly regular graph is distance-regular with diameter two and parameters $(n,b_0,a_1,c_2)$.
For $x, y \in V(\Gamma)$ with $\partial(x,y)=2$, the subgraph of $\Gamma$ induced by $\Gamma(x)\cap \Gamma(y)$ is called the \emph{$\mu(x,y)$-graph} of $\Gamma$.
If the $\mu(x,y)$-graph of $\Gamma$ does not depend on the choice of $x$ and $y$, then we simply call it the \emph{$\mu$-graph} of $\Gamma$.

\begin{lem}[{cf. \cite[Theorem 3.1]{jk2000}}]\label{lem thm3.1}
Let $\Gamma$ be a distance-regular graph.
Suppose that all local graphs of $\Gamma$ are strongly regular with parameters $(n',k',a',c')$.
Then the following {\rm(i)}, {\rm(ii)} hold:
\begin{enumerate}[\normalfont(i)]
	\item $\mu$-graphs of $\Gamma$ are $c'$-regular.
	\item $c_2c'$ is even.
\end{enumerate}
\end{lem}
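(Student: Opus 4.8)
The plan is to prove (i) by a local-graph counting argument and then deduce (ii) from the handshake lemma. For part (i), I would fix vertices $x,y \in V(\Gamma)$ with $\partial(x,y)=2$, so that the $\mu(x,y)$-graph is the subgraph induced on $\Gamma(x)\cap\Gamma(y)$, a vertex set of size $c_2$. I would then fix an arbitrary vertex $z \in \Gamma(x)\cap\Gamma(y)$ and compute its degree in the $\mu(x,y)$-graph, namely the number of $w \in \Gamma(x)\cap\Gamma(y)$ with $w \sim z$.

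The key step is to carry out this count inside the local graph $\Delta(z)$, rather than at $x$ or $y$. Since $z$ is adjacent to both $x$ and $y$, both $x$ and $y$ are vertices of $\Delta(z)$; and because $\partial(x,y)=2$ they are non-adjacent in $\Gamma$, hence non-adjacent in $\Delta(z)$. As $\Delta(z)$ is strongly regular with parameters $(n',k',a',c')$, the two distinct non-adjacent vertices $x,y$ have exactly $c'$ common neighbors within $\Delta(z)$. A vertex $w$ is such a common neighbor precisely when $w\in\Gamma(z)$, $w\sim x$, and $w\sim y$; equivalently $w\in\Gamma(x)\cap\Gamma(y)$ with $w\sim z$. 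Thus the set of common neighbors of $x,y$ in $\Delta(z)$ is exactly the neighborhood of $z$ in the $\mu(x,y)$-graph, so $z$ has degree $c'$ there. Since $z$ was arbitrary and this holds for every such pair $x,y$, the $\mu$-graphs of $\Gamma$ are $c'$-regular, proving (i).

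For part (ii), I would apply the handshake lemma to a $\mu(x,y)$-graph: it has $c_2$ vertices, each of degree $c'$ by (i), so the sum of its vertex degrees equals $c_2 c'$. Since this sum is twice the number of edges, it is even, and hence $c_2 c'$ is even.

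I do not anticipate a genuine obstacle; the only real subtlety is choosing the right vantage point, namely passing to the local graph at a \emph{common neighbor} $z$, where the distance-two pair $(x,y)$ becomes a non-adjacent pair so that the strong-regularity parameter $c'$ can be invoked. Once that identification between ``common neighbors of $x,y$ in $\Delta(z)$'' and ``neighbors of $z$ in the $\mu(x,y)$-graph'' is in place, the degree count is immediate, and (ii) follows by a purely combinatorial parity argument.
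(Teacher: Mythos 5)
Your proof is correct, and it is the standard argument: the paper itself gives no proof of this lemma, citing it from Juri\v{s}i\'{c}--Koolen, and your computation (counting the common neighbors of the non-adjacent pair $x,y$ inside the local graph $\Delta(z)$ to get degree $c'$, then applying the handshake lemma) is exactly how that result is established. No gaps.
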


The graph $\Gamma$ is said to be \emph{antipodal} whenever for any vertices $x,y,z$ such that $\partial(x,y)=\partial(x,z)=d$, it follows that $\partial(y,z)=d$ or $y=z$.
The property of being at distance $d$ or zero induces an equivalence relation on $V(\Gamma)$, and the equivalence classes are called \emph{antipodal classes}.
We say that $\Gamma$ is an \emph{antipodal $r$-cover} if the equivalence classes have size $r$.

\begin{lem}[{cf. \cite[Section 4]{jk2002}}]\label{lem antipodal quo}
Let $\Gamma$ be an antipodal distance-regular graph with diameter four, $n$ vertices, valency $k$, and antipodal class size $r$.
Then the intersection array of $\Gamma$ is determined by parameters $(k,a_1,c_2,r)$, and has the following form:
\begin{equation}\label{pps3 eq(1)}
	\{b_0, b_1,b_2,b_3; c_1, c_2, c_3, c_4\}=
	\{k, k-a_1-1,(r-1)c_2,1; 1, c_2, k-a_1-1,k\}.
\end{equation}
Let $k=\theta_0>\theta_1>\theta_2>\theta_3>\theta_4$ denote the eigenvalues of $\Gamma$.
Then the parameters $a_1, c_2$ are expressed in terms of the eigenvalues and $r$:
\begin{equation}\label{pps3 eq(2)}
	a_1=\theta_1 + \theta_3, \qquad \qquad  c_2=\frac{\theta_0+\theta_2\theta_4}{r}.
\end{equation}
\end{lem}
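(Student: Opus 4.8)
The plan is to prove the two assertions in turn. For the shape of the array in \eqref{pps3 eq(1)}, I would start from the general structure theory of antipodal distance-regular graphs: for an antipodal $r$-cover of diameter $d$ one has $b_i = c_{d-i}$ whenever $2i \neq d$ (see \cite[Sec.~4.2]{BCN} and \cite[Section~4]{jk2002}). Taking $d = 4$ gives $c_4 = b_0 = k$, $c_3 = b_1$, and $b_3 = c_1 = 1$ at once; the identities $b_0 = k$ and $c_1 = 1$ are automatic, and $b_1 = k - a_1 - 1$ follows from $a_1 + b_1 + c_1 = k$, so $c_3 = k - a_1 - 1$ as well. The only remaining entry is $b_2$. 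Here I would use that the vertices at distance $4$ from a fixed vertex $x$ are exactly the other $r-1$ members of its antipodal class, so $k_4 = r-1$; substituting the entries already found into $k_4 = b_0 b_1 b_2 b_3/(c_1 c_2 c_3 c_4)$ collapses the right-hand side to $b_2/c_2$, giving $b_2 = (r-1)c_2$. This proves \eqref{pps3 eq(1)} and shows the array depends only on $(k, a_1, c_2, r)$.

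For \eqref{pps3 eq(2)} I would work with the cosine sequence $u_0, \ldots, u_4$ of an eigenvalue $\theta$, normalized by $u_0 = 1$ and governed by $c_i u_{i-1} + a_i u_i + b_i u_{i+1} = \theta u_i$. The key structural input is that the distance-$4$ matrix $A_4$ equals the block-diagonal all-ones fiber pattern minus $I$; thus $A_4$ acts as $r-1$ on vectors constant on fibers and as $-1$ on vectors summing to zero on fibers. Since $A_4$ is a polynomial in $A$, every eigenspace of $\Gamma$ sits inside one of these two subspaces, and because $A_4$ has eigenvalue $k_4 u_4 = (r-1)u_4$ on the cosine sequence, the two possibilities are $u_4 = 1$ (``quotient'' eigenvalues) and $u_4 = -1/(r-1)$ (``new'' eigenvalues). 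I would identify $\theta_0, \theta_2, \theta_4$ as the quotient values and $\theta_1, \theta_3$ as the new ones, the interlacing order being the standard one for antipodal $4$-covers.

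To extract the formulas I would then push the recurrences. For a new eigenvalue, $u_4 = -1/(r-1)$ gives $u_3 = \theta u_4/k$ from the $i=4$ relation; feeding this into the $i=3$ relation and comparing with the expression for $b_1 u_2$ coming from the $i=1$ relation yields, after the factor $r$ cancels, the quadratic $\theta^2 - a_1\theta - k = 0$. Hence $\theta_1, \theta_3$ are its two roots and $\theta_1 + \theta_3 = a_1$. For $c_2$, I would instead use that the antipodal quotient $\bar\Gamma$ is strongly regular with valency $k$ and parameters $a = a_1$, $c = rc_2$; a short double count of the common neighbors of $x$ with the $r$ vertices of a class at distance $2$ shows the $c$-parameter equals $rc_2$. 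Its two nontrivial eigenvalues are exactly $\theta_2, \theta_4$, and they satisfy $\theta_2\theta_4 = rc_2 - k$; solving for $c_2$ gives $c_2 = (\theta_0 + \theta_2\theta_4)/r$.

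The eigenvalue algebra is routine once the setup is in place, so the main obstacle is the structural bookkeeping: verifying the fiber splitting of the eigenspaces, pinning down $\bar c_2 = rc_2$ (equivalently $a_2 = k - rc_2$) for the quotient, and confirming the interlacing that assigns the new eigenvalues to positions $1$ and $3$. These are the points where one must argue combinatorially rather than compute, and I would handle them by the counting sketched above together with a citation to \cite[Ch.~4]{BCN}; everything else reduces to the three-term recurrence.
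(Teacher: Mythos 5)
Your proposal is correct, and it follows the standard derivation that the paper itself does not reproduce but simply cites from \cite[Section 4]{jk2002} and \cite[Section 4.2]{BCN}: the relations $b_i=c_{4-i}$ for $i\neq 2$ plus $k_4=r-1$ give the array, and the fiber splitting of the eigenspaces (quotient eigenvalues $\theta_0,\theta_2,\theta_4$ of the strongly regular quotient with $\bar c_2=rc_2$, new eigenvalues $\theta_1,\theta_3$ as roots of $\theta^2-a_1\theta-k=0$) gives \eqref{pps3 eq(2)}. The only cosmetic quibble is that in the cosine-sequence step the factor that cancels is $1-u_4=r/(r-1)$ rather than $r$ itself, which does not affect the argument.
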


Let $\Omega$ denote the set of triples of vertices $(x, y, z)$ of $\Gamma$ such that $\partial(x,y)=1$ and  $\partial(x,z)=\partial(y,z)=2$.
For $(x,y,z)\in \Omega$, we define the number $\alpha(x,y,z) := |\Gamma(x) \cap \Gamma(y) \cap \Gamma(z)|$, called the (\emph{triple}) \emph{intersection number} of $\Gamma$.
We say that the intersection number $\alpha$ of $\Gamma$ \emph{exists}  whenever $\alpha=\alpha(x,y,z)$ is independent of all  $(x,y,z)\in \Omega$.
If $\Gamma$ is a $1$-homogeneous graph with diameter $d\geq 2$ and $a_2\ne 0$, then the intersection number $\alpha$ of $\Gamma$ exists. 
This is because, according to the definition of $1$-homogeneity, for any two adjacent vertices $x$ and $y$ and for any vertex $z \in \Gamma_2(x)\cap \Gamma_2(y)$, the scalar $\alpha=|\Gamma(z)\cap \Gamma(x)\cap \Gamma(y)|$ is constant; see \cite[Lemma 11.5]{jkt2000}.
A strongly regular graph with $a_2\ne 0$, that is locally strongly regular is $1$-homogeneous   if and only if $\alpha$ exists; cf. \cite{jkt2000}.

We now recall an antipodal tight graph $\mathrm{AT4}(p,q,r)$.
In the following three lemmas, we review some properties concerning $\mathrm{AT4}(p,q,r)$ from \cite{jk2002}, which will be used later.

\begin{lem}[cf. \cite{jk2002}]\label{pps1}
Let $\Gamma$ denote an antipodal tight graph $\mathrm{AT4}(p,q,r)$. 
Then the following {\rm(i)}--{\rm(iv)} hold.
\begin{enumerate}[\normalfont(i)]
	\item The graph $\Gamma$ has nontrivial eigenvalues $\theta_1>\theta_2>\theta_3>\theta_4$, where
	\begin{equation}\label{nontv eig}
	\theta_1 = pq+p+q, \quad \theta_2=p, \quad \theta_3=-q, \quad \theta_4=-q^2,
	\end{equation}
	and its intersection array is
	\begin{equation*}
	\begin{split}
	& \left\{q(pq+p+q),(q^2-1)(p+1),\frac{(r-1)q(p+q)}{r},1; \right. \\
	& \left. \qquad 1,\frac{q(p+q)}{r},(q^2-1)(p+1),q(pq+p+q)\right\}.
	\end{split}
	\end{equation*}

	\item The local graph of $\Gamma$ at each vertex is strongly regular with parameters $(n',k',a',c')=(q(pq+p+q), p(q+1),2p-q,p)$, and its spectrum is given by
	\begin{equation}\label{local srg spec}
	\begin{pmatrix}
	p(q+1) & p & -q \\
	1 & \ell_1 & \ell_2  \\
	\end{pmatrix},
	\end{equation}
	where 
	\begin{equation}\label{ell1,2}
	\ell_1 :=\frac{(q^2-1)(pq+p+q)}{p+q} \quad \text{and} \quad \ell_2:=\frac{pq(p+1)(q+1)}{p+q}.
	\end{equation}

	\item The graph $\Gamma$ is $1$-homogeneous. In particular, $\alpha=(p+q)/r$.

	\item The parameters $p,q,r$ are integers such that $p\geq1,q\geq2,r\geq2$ and	
	\begin{enumerate}[\normalfont(1)]
		\item $pq(p+q)/r$ is even, $r(p+1)\leq q(p+q)$, and $r|p+q$,
		\item $p\geq q-2$, with equality if and only if the Krein parameter $q_{4,4}^4=0$,
		\item $(p+q)|q^2(q^2-1)$ and $(p+q^2)|q^2(q^2-1)(q^2+q-1)(q+2)$.
	\end{enumerate}
\end{enumerate}
\end{lem}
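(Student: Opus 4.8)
The plan is to start from the tightness characterization recalled in the introduction: each local graph of $\Gamma$ is a connected strongly regular graph whose nontrivial eigenvalues are $b^{+}=-1-b_1/(1+\theta_4)$ and $b^{-}=-1-b_1/(1+\theta_1)$. By hypothesis these equal $p$ and $-q$, so I would record the two tightness relations $p=-1-b_1/(1+\theta_4)$ and $-q=-1-b_1/(1+\theta_1)$ and combine them with the antipodal structure of Lemma~\ref{lem antipodal quo}, which supplies the shape of the intersection array together with $a_1=\theta_1+\theta_3=\theta_1-q$ and $c_2=(\theta_0+\theta_2\theta_4)/r$.

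For part~(i) I would first substitute $b_1=k-a_1-1=k-\theta_1+q-1$ into the relation coming from $b^{-}=-q$; this collapses to the clean identity $k=q\theta_1$. The point requiring care is that the remaining natural relations — the trace of the intersection matrix and the strongly regular relations for the antipodal quotient (an $\mathrm{SRG}$ with $\bar\mu=rc_2$ and nontrivial eigenvalues $\theta_2=p$ and $\theta_4$) — turn out to be \emph{dependent}, so they do not by themselves pin down $\theta_1$. The missing equation is the degree identity $k'(k'-a'-1)=(n'-k'-1)c'$ of the local strongly regular graph, where $n'=k$, $k'=a_1$, and (from the local eigenvalue relations $p+(-q)=a'-c'$ and $p(-q)=c'-k'$) one has $c'=k'-pq$ and $a'=c'+p-q$. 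Substituting $k=q\theta_1$ and simplifying reduces this identity to $\theta_1\bigl(\theta_1-(pq+p+q)\bigr)=0$, forcing $\theta_1=pq+p+q$. Back-substitution gives $k=q(pq+p+q)$, and the tightness relation for $b^{+}$ together with $1+\theta_1=(p+1)(q+1)$ yields $1+\theta_4=-(q-1)(q+1)$, i.e. $\theta_4=-q^2$. The full intersection array is then read off from Lemma~\ref{lem antipodal quo} using $c_2=q(p+q)/r$.

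Parts~(ii) and~(iii) are then mostly bookkeeping. For~(ii), the values $c'=k'-pq=p$, $a'=2p-q$, $n'=k$, and $k'=a_1=p(q+1)$ come directly from the eigenvalue relations above, and the multiplicities $\ell_1,\ell_2$ follow from the standard system $\ell_1+\ell_2=n'-1$ and $k'+p\,\ell_1-q\,\ell_2=0$. For~(iii), $1$-homogeneity is exactly the tightness characterization recalled in the introduction; to obtain $\alpha=(p+q)/r$ I would count, for a triple $(x,y,z)\in\Omega$, the common neighbours of $x,y,z$ as the neighbours of $y$ lying in the $\mu(x,z)$-graph $\Gamma(x)\cap\Gamma(z)$, which by Lemma~\ref{lem thm3.1}(i) is $c'$-regular on $c_2$ vertices; a short equitable-partition/double-counting argument inside the local graph then yields the stated value.

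Finally, for the feasibility conditions~(iv): $p,q$ are integers because intersection numbers such as $a_1=p(q+1)$ are integral, forcing the local eigenvalues to be rational and hence, being algebraic integers, integral; $r\ge2$ is the cover index, $p\ge1$ follows from $a_1\ne0$, and $q\ge2$ from non-degeneracy (in particular $\theta_3=-q$ and $\theta_4=-q^2$ must be distinct). In~(1), $r\mid p+q$ follows from $\alpha=(p+q)/r\in\mathbb{Z}$; the parity statement is Lemma~\ref{lem thm3.1}(ii) applied to $c_2c'=pq(p+q)/r$; and $r(p+1)\le q(p+q)$ is equivalent to $c_2\ge p+1$, which holds because the $p$-regular $\mu$-graph has exactly $c_2$ vertices. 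Condition~(2) requires computing the Krein parameter $q^{4}_{44}$ from the eigenvalues and using $q^{4}_{44}\ge0$, and~(3) requires writing the eigenvalue multiplicities $m_i$ explicitly and imposing integrality, the denominators $p+q$ and $p+q^2$ producing the two divisibility relations. I expect the main obstacle to be exactly this last block: the Krein computation in~(2) and the multiplicity-integrality computation in~(3) are where genuinely new algebra (rather than assembling the tightness facts) is needed, and the multiplicity formulas must be handled carefully to extract $(p+q)\mid q^2(q^2-1)$ and $(p+q^2)\mid q^2(q^2-1)(q^2+q-1)(q+2)$.
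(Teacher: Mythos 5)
The paper does not actually prove this lemma: it is quoted verbatim as background, with the proof delegated to the citation \cite{jk2002} (and \cite{jk2007} for related facts), so there is no in-paper argument to compare against. Judged on its own, your reconstruction is essentially the standard derivation and the parts you carry out are correct. In particular, the chain $b^-=-q \Rightarrow k=q\theta_1$, followed by the local strongly regular count $k'(k'-a'-1)=(n'-k'-1)c'$ with $n'=k$, $k'=a_1=\theta_1-q$, $c'=k'-pq$, $a'=c'+p-q$, does reduce (after cancelling $q-1$) to $\theta_1\bigl(\theta_1-(pq+p+q)\bigr)=0$, and then $b_1=(q-1)(p+1)(q+1)$ gives $1+\theta_4=-(q^2-1)$, $c_2=(k+\theta_2\theta_4)/r=q(p+q)/r$, and the stated array; I verified these identities. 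Part (ii) is routine from $c'=p$, $a'=2p-q$ and the multiplicity system, and your double count for $\alpha$ closes correctly as $\alpha=c_2(a_1-c')/a_2=(p+q)/r$, though you should state explicitly that $|\Gamma(x)\cap\Gamma_2(z)|=a_2$ (by symmetry of $x$ and $z$ at distance $2$) to make that step airtight.

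The one genuine shortfall is in (iv)(2) and (iv)(3): you correctly identify that these come from $q^4_{44}\ge 0$ and from integrality of the eigenvalue multiplicities $m_i$ of $\Gamma$, but you do not perform either computation, and these are precisely the substantive content of the cited source (whose title is about Krein parameters). For instance, extracting $(p+q)\mid q^2(q^2-1)$ and $(p+q^2)\mid q^2(q^2-1)(q^2+q-1)(q+2)$ requires writing $m_2$ and $m_4$ explicitly as polynomials in $p,q,r$ plus remainder terms with denominators $p+q$ and $p+q^2$ (the same polynomial-division trick the paper uses later in Lemma \ref{lem1}), and the equivalence ``$p=q-2$ iff $q^4_{44}=0$'' needs the actual Krein formula. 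As an outline your proposal is sound; as a proof it is complete only for (i)--(iii) and the elementary clauses of (iv).
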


\noindent
We remark that by Lemma \ref{pps1}(i) one readily finds the intersection numbers $\{a_i\}^4_{i=0}$ of $\Gamma$:
\begin{equation}\label{AT4 int num ai}
	a_0=a_4=0, \qquad a_1=a_3=p(q+1), \qquad a_2=pq^2.
\end{equation}

\begin{lem}[cf. {\cite[Theorem 4.3]{jk2007}}]\label{lem mu-graph}
Let $\Gamma$ be an antipodal tight graph $\mathrm{AT4}(p,q,r)$ with $p>1$.
Then its $\mu$-graphs are complete multipartite if and only if there exists an integer $s$ such that $(p,q,r)=(qs,q,q)$.
\end{lem}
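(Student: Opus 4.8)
The plan is to translate the complete-multipartite condition into a statement about the internal regularity parameters of a $\mu$-graph and then to solve the resulting parameter equation. First I would record the two basic invariants of a $\mu$-graph $G$ of $\Gamma$: by Lemma~\ref{lem thm3.1}(i) together with Lemma~\ref{pps1}(ii) it is regular of degree $c'=p$, and by Lemma~\ref{pps1}(i),(iii) it has $c_2=q(p+q)/r=q\alpha$ vertices, where $\alpha=(p+q)/r$. A \emph{regular} complete multipartite graph must be balanced, say $G\cong K_{m\times t}$, so that $mt=q\alpha$ and the degree $(m-1)t=p$; subtracting gives the part size $t=q\alpha-p$. The pivotal observation is the equivalence $t=q\iff r=q$: setting $q\alpha-p=q$ forces $q(p+q)/r=p+q$, whence $r=q$ (as $p+q>0$), and conversely $r=q$ makes $\alpha=(p+q)/q$ integral, so $q\mid p$, $p=qs$, $t=q$, and $m=q\alpha/t=\alpha=s+1$. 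Thus the lemma reduces to showing that a complete multipartite $\mu$-graph of $\Gamma$ is forced to have part size exactly $q$ (the hypothesis $p>1$ being used to keep $G$ a nondegenerate, edge-bearing graph).

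For the forward direction I would use that complete multipartiteness is equivalent to transitivity of non-adjacency in $G$, i.e.\ that any two non-adjacent $\mu$-vertices $a,c$ share \emph{all} their neighbors in $G$, hence have the maximal number $p$ of common neighbors there. Writing $G=\Gamma(u)\cap\Gamma(v)$ with $\partial(u,v)=2$, this number is the quadruple intersection count $|\Gamma(a)\cap\Gamma(c)\cap\Gamma(u)\cap\Gamma(v)|$ attached to the quadrangle $a\sim u\sim c\sim v\sim a$ whose two diagonals lie at distance $2$. The key step is to evaluate this count in terms of $p,q,r$: since $\Gamma$ is tight, hence $1$-homogeneous with the explicit local spectrum \eqref{local srg spec} and with $\alpha=(p+q)/r$, the relevant triple and quadrangle intersection numbers are determined by $p,q,r$. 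Imposing the complete-multipartite requirement that this count equal $p$ then collapses, after simplification, to $q(p+q)/r=p+q$, i.e.\ $r=q$ and $p=qs$.

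For the converse I would take $(p,q,r)=(qs,q,q)$ and show directly that $G\cong K_{(s+1)\times q}$. Here $c_2=p+q=q(s+1)$, so each vertex of the $p$-regular graph $G$ has exactly $q-1$ non-neighbors; it therefore suffices to prove that every vertex together with its $q-1$ non-neighbors forms a coclique, as these $q$-cocliques will then be the parts. I would establish this by exploiting the smallest eigenvalue $-q$ of the local strongly regular graph $\Delta(u)$ from Lemma~\ref{pps1}(ii): such a $q$-set is an independent set meeting a Hoffman/interlacing-type bound in $\Delta(u)$ with equality, which pins down its structure and yields $N_G(a)=N_G(c)$ for non-adjacent $a,c$, forcing the balanced complete multipartite form.

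The main obstacle in both directions is the evaluation of the quadrangle intersection number $|\Gamma(a)\cap\Gamma(c)\cap\Gamma(u)\cap\Gamma(v)|$ and, dually, the extremal coclique analysis inside $\Delta(u)$: existence of triple intersection numbers is guaranteed by $1$-homogeneity, but controlling this fourfold count requires the full tight/antipodal structure (the explicit local spectrum and the value $\alpha=(p+q)/r$) rather than distance-regularity alone. Once this quantity is shown to be a fixed function of $p,q,r$, both implications reduce to the single algebraic identity $q(p+q)/r=p+q$, equivalently $r=q$.
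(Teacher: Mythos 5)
The paper offers no proof of this statement to compare against: Lemma \ref{lem mu-graph} is imported verbatim from \cite[Theorem 4.3]{jk2007}. Judged on its own terms, your bookkeeping is correct — a regular complete multipartite $\mu$-graph is balanced, $K_{m\times t}$ with $t=c_2-p=q(p+q)/r-p$, and $t=q$ is equivalent to $r=q$, whereupon integrality of $\alpha=(p+q)/r$ gives $p=qs$ — but the step carrying all the content in the forward direction rests on a false premise. You claim that $1$-homogeneity makes the quadruple count $|\Gamma(a)\cap\Gamma(c)\cap\Gamma(u)\cap\Gamma(v)|$ ``determined by $p,q,r$.'' It does not: $1$-homogeneity makes the distance partition with respect to an \emph{edge} equitable and hence fixes triple intersection numbers, but says nothing about fourfold intersections attached to a quadrangle. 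Concretely, for the Soicher2 graph $\mathrm{AT4}(20,4,3)$ the $\mu$-graph is the $\overline{K_2}$-extension of $\tfrac{1}{2}Q_5$ (Table \ref{table1}, A9), in which a non-adjacent pair has either $20$ or $12$ common neighbours inside the $\mu$-graph — the quantity you propose to evaluate is not even constant on a single $\mu$-graph, let alone a function of $(p,q,r)$. The only unconditional identity available (double counting $\sum|N_G(a)\cap N_G(c)|$) reduces to $p=p$. A repairable route is spectral: $K_{m\times t}$ has least eigenvalue $-t$, the $\mu$-graph is induced in the local graph whose least eigenvalue is $-q$ by Lemma \ref{pps1}(ii), so interlacing gives $t\le q$, i.e.\ $r\ge q$; combining with $r\le q$, which for $p>1$ follows from Lemma \ref{pps2}, forces $r=q$. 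This is also where $p>1$ genuinely enters — $\mathrm{AT4}(1,2,3)$ has $\mu$-graph $K_2$, which is complete multipartite with $r\ne q$ — not merely to keep the $\mu$-graph ``edge-bearing.''

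The converse is circular as sketched. You want to show that each vertex of $G$ together with its $q-1$ non-neighbours forms a coclique, and you invoke equality in a Hoffman/ratio-type bound for that $q$-set. But the ratio bound, applied to the $p$-regular graph $G$ on $p+q$ vertices with least eigenvalue $\ge -q$, only yields $\alpha(G)\le q$, and its equality analysis describes the structure of a $q$-coclique \emph{once one exists}; it cannot manufacture the coclique, i.e.\ it cannot show that non-adjacency in $G$ is transitive. A $(q-1)$-regular complement with clique number at most $q$ need not decompose into disjoint copies of $K_q$ (already for $q=3$ the complement is a disjoint union of cycles, not necessarily triangles). This direction is the substantive half of \cite[Theorem 4.3]{jk2007} and requires more of the tight, $1$-homogeneous structure than your outline supplies.
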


\begin{lem}[cf. {\cite[Corollary 4.5]{jk2007}}]\label{pps2}
Let $\Gamma$ denote an antipodal tight graph $\mathrm{AT4}(p,q,r)$. Then exactly one of the following statements holds.

\begin{enumerate}[\normalfont(i)]
	\item $\Gamma$ is the unique $\mathrm{AT4}(1,2,3)$ graph (and $\alpha=1$), i.e., the Conway-Smith graph.
	\item $\Gamma$ is an $\mathrm{AT4}(q-2,q,q-1)$ graph (and $\alpha=1$).
	\item $\Gamma$ is an $\mathrm{AT4}(qs,q,q)$ graph, where $s$ is an integer (and $\alpha=s+1$).
	\item $(p+q)(2q+1)\geq3r(p+2)$ and $\alpha\geq3$, in particular, $r\leq q-1$.
\end{enumerate}
\end{lem}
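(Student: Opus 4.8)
The plan is to organize the whole argument around the triple intersection number, which by Lemma~\ref{pps1}(iii) equals $\alpha=(p+q)/r$ and is a positive integer since $r\mid p+q$ by Lemma~\ref{pps1}(iv)(1). I would lean on two structural facts. First, by Lemma~\ref{pps1}(ii) the local graphs are strongly regular with $c'=p$, so by Lemma~\ref{lem thm3.1}(i) the $\mu$-graph is $p$-regular on $c_2=q(p+q)/r=q\alpha$ vertices. Second, by Lemma~\ref{lem mu-graph} (valid for $p>1$) this $\mu$-graph is complete multipartite exactly when $(p,q,r)=(qs,q,q)$. I would then run a case analysis on $\alpha$, deciding in each case whether the $\mu$-graph is complete multipartite.

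First I would treat $\alpha=1$. Then $r=p+q$, and feeding this into $r(p+1)\le q(p+q)$ from Lemma~\ref{pps1}(iv)(1) gives $p\le q-1$, while Lemma~\ref{pps1}(iv)(2) gives $p\ge q-2$; hence $p\in\{q-1,q-2\}$. Now the $\mu$-graph is $p$-regular on $q$ vertices, so it is the complete graph when $p=q-1$, while for $p=q-2$ its complement is $1$-regular, forcing the cocktail-party graph (and $q$ even); both are complete multipartite. Lemma~\ref{lem mu-graph} then forces $(p,q,r)=(qs,q,q)$, which is incompatible with $r=p+q$ and $p\in\{q-1,q-2\}$ unless $p=1$. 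The residual $p=1$ analysis is short—Lemma~\ref{pps1}(iv)(2) already forces $q\le3$—and leaves exactly the Conway--Smith graph $\mathrm{AT4}(1,2,3)$, which is (i).

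For $\alpha\ge2$ I would first peel off the complete-multipartite case: Lemma~\ref{lem mu-graph} turns it into $(p,q,r)=(qs,q,q)$ with $s=\alpha-1$, i.e.\ (iii). It then remains to handle a $\mu$-graph that is $p$-regular on $q\alpha$ vertices and \emph{not} complete multipartite. When $\alpha=2$, the constraint $2\mid p+q$ (from $r\in\mathbb{Z}$) together with $p\ge q-2$ confines $p$ to $q-2,q,q+2,\dots$ up to $p\le2q-1$; since $p=q$ is the complete-multipartite value already removed, the goal is to show the non-complete-multipartite hypothesis eliminates $p=q+2,q+4,\dots$ and leaves only $p=q-2$, which is (ii). When $\alpha\ge3$, the goal is to prove $(p+q)(2q+1)\ge3r(p+2)$; using $p+q=r\alpha$ this is equivalent to $\alpha(2q+1)\ge3(p+2)$, i.e.\ the upper bound $p\le\big(\alpha(2q+1)-6\big)/3$ on the regularity of a non-complete-multipartite $\mu$-graph on $q\alpha$ vertices. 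Granting this, $r\le q-1$ follows by a short computation: from $p+q=r\alpha$ one gets $r\le\tfrac{2q+1}{3}+\tfrac{q-2}{\alpha}\le q-\tfrac13$ for $\alpha\ge3$, so $r\le q-1$ as $r$ is an integer.

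I expect the main obstacle to be the structural analysis of the non-complete-multipartite $\mu$-graph that drives both the $\alpha=2$ exclusion and the inequality for $\alpha\ge3$. I would attack it by exploiting that the $\mu(x,z)$-graph sits inside the local strongly regular graph $\Delta(x)$, whose smallest eigenvalue is $-q$ by Lemma~\ref{pps1}(ii); interlacing then constrains the spectrum of the $\mu$-graph. Moreover, the $1$-homogeneity of $\Gamma$ (Lemma~\ref{pps1}(iii)) makes the configuration of a vertex $y\in\Gamma(x)$ with $\partial(y,z)=2$ into an equitable-type partition of $\Delta(x)$ in which $y$ has exactly $\alpha$ neighbors in the $\mu$-set $\Gamma(x)\cap\Gamma(z)$. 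Turning this spectral and counting data into a sharp degree bound is what should simultaneously force complete multipartiteness (hence (ii)/(iii)) when $\alpha\le2$ and yield $\alpha(2q+1)\ge3(p+2)$ when $\alpha\ge3$. Finally, the four alternatives are mutually exclusive because their $(p,q,r)$-forms and $\alpha$-values are pairwise distinct, so recording the $\alpha$-value together with the complete-multipartite dichotomy shows that exactly one of them holds.
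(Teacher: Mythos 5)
The paper never proves this statement: it is imported verbatim as background from Juri\v{s}i\'{c} and Koolen \cite[Corollary 4.5]{jk2007}, so there is no internal proof to compare yours against. Judged on its own terms, your skeleton is reasonable and several pieces are genuinely complete: the $\alpha=1$ analysis (forcing $p\in\{q-1,q-2\}$ via Lemma \ref{pps1}(iv)(1)--(2), recognizing the $p$-regular $\mu$-graph on $q$ vertices as complete multipartite, and then using Lemma \ref{lem mu-graph} together with the parity of $pq(p+q)/r$ to leave only $\mathrm{AT4}(1,2,3)$) is correct; so is the identification of the complete-multipartite case with $(qs,q,q)$ and $\alpha=s+1$, and the derivation of $r\le q-1$ from $\alpha(2q+1)\ge 3(p+2)$. (Your computation $\alpha=(2q-2)/(q-1)=2$ for case (ii) is also right; the ``$\alpha=1$'' printed in item (ii) is inconsistent with Lemma \ref{pps1}(iii) and with entry B3 of Table \ref{table1}, and appears to be a typo in the statement.)

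The genuine gap is that the two steps carrying all the content of the result are left as goals rather than proved: for $\alpha=2$ you must show that a non-complete-multipartite $\mu$-graph excludes $p=q+2,q+4,\dots,2q-1$ and forces $p=q-2$, and for $\alpha\ge3$ you must establish $(p+q)(2q+1)\ge 3r(p+2)$. You flag these yourself as ``the main obstacle'' and propose only a direction (interlacing against the local strongly regular graph, and the equitable partition coming from $1$-homogeneity), with no bound actually derived; it is far from clear that interlacing alone yields a degree bound of the required strength, since the needed input in \cite{jk2007} is a structural analysis of the $\mu$-graphs (they are shown to be locally complete multipartite, and a counting argument on such graphs produces the inequality). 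There is also a small unaddressed corner: Lemma \ref{lem mu-graph} requires $p>1$, so the $p=1$ subcases of your $\alpha\ge2$ branch need separate (short) treatment. As it stands the proposal is a correct reduction plus a restatement of the theorem's hard core, not a proof.
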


Lastly, we recall the spectral excess theorem \cite{van2008}.
Recall the graph $\Gamma$ with vertex set $V(\Gamma)$ and diameter $d$.
Denote the spectrum of $\Gamma$ by
$\mathrm{Spec}(\Gamma)=\{{\lambda_0}^{m_0}, {\lambda_1}^{m_1}, \ldots, {\lambda_d}^{m_d}\}$.
Let $\mathcal{P}$ denote the vector space of polynomials of degree at most $d$.
With reference to $\mathrm{Spec}(\Gamma)$ define an inner product on $\mathcal{P}$ by
\begin{equation}\label{inn prod}
	\langle p,q \rangle = \frac{1}{n} \sum^d_{i=0} m_i p(\lambda_i) q(\lambda_i).
\end{equation}
With respect to \eqref{inn prod}, there exists a unique system of orthogonal polynomials $\{p_i\}^d_{i=0}$ such that $p_i$ has degree $i$ and $\langle p_i, p_i\rangle = p_i(\lambda_0)$ for $0\leq i \leq d$.
\begin{lem}[cf. {\cite[Theorem 1]{van2008}}]\label{spectral excess thm}
Let $\Gamma$ be a connected $k$-regular graph on $n$ vertices with diameter $d$.
Let $\{p_i\}^d_{i=0}$ be the orthogonal polynomials corresponding to $\Gamma$.
If $k_d(x)$ is the number of vertices at distance $d$ from a vertex $x$ in $\Gamma$, then
\begin{equation}\label{ineq xp.ex.thm}
	n-p_d(k) \leq  n \left[ \sum_{x\in V(\Gamma)} \frac{1}{n-k_d(x)} \right]^{-1},
\end{equation}
with equality if and only if $\Gamma$ is distance-regular.
\end{lem}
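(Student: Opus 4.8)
The plan is to realize the polynomial inner product \eqref{inn prod} as a trace inner product on matrices and then to compare two explicit matrices by a vertex-by-vertex Cauchy--Schwarz argument. Write $A$ for the adjacency matrix of $\Gamma$, let $J$ be the all-ones matrix, $\mathbf{1}$ the all-ones vector, and let $A_d$ denote the distance-$d$ matrix, so that $(A_d)_{xy}=1$ exactly when $\partial(x,y)=d$ and $(A_d\mathbf{1})_x=k_d(x)$. On symmetric matrices I set $\langle M,N\rangle=\tfrac1n\operatorname{tr}(MN)$; since $\operatorname{tr}\big(p(A)q(A)\big)=\sum_{i=0}^d m_i\,p(\lambda_i)q(\lambda_i)$, this agrees with \eqref{inn prod} under $p\mapsto p(A)$. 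Hence $\{p_i(A)\}_{i=0}^d$ is an orthogonal basis of the adjacency algebra $\mathbb{R}[A]$ with $\|p_i(A)\|^2=p_i(k)$, and I will freely use the standard facts from the theory of predistance polynomials (see \cite{van2008,vDKT}) that $\sum_{i=0}^d p_i(A)=J$, that $\sum_{i=0}^d p_i(k)=n$, and that $A_d\in\mathbb{R}[A]$ holds if and only if $\Gamma$ is distance-regular.

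First I would introduce the matrix $Q:=J-p_d(A)=\sum_{i=0}^{d-1}p_i(A)$, which is a polynomial in $A$ of degree at most $d-1$. Orthogonality gives $\|Q\|^2=\sum_{i=0}^{d-1}p_i(k)=n-p_d(k)$, and because $\mathbf{1}$ is the $k$-eigenvector we have $Q\mathbf{1}=(n-p_d(k))\mathbf{1}$, so every row of $Q$ sums to $n-p_d(k)$. The crucial structural point is that, since $\deg Q\le d-1$ and the entries of $A^j$ vanish between vertices at distance greater than $j$, the entry $(Q)_{xy}$ is zero whenever $\partial(x,y)=d$. Thus the column $q_x:=Q\mathbf{e}_x$ is supported on $\{y:\partial(x,y)\le d-1\}$, which is precisely the support of the $0/1$ column $b_x:=(J-A_d)\mathbf{e}_x$, where $\|b_x\|^2=n-k_d(x)$.

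Next I would apply Cauchy--Schwarz to $q_x$ and $b_x$ at each vertex $x$. As $q_x$ is orthogonal to the distance-$d$ indicator $A_d\mathbf{e}_x$, we get $\langle q_x,b_x\rangle=\langle q_x,\mathbf{1}\rangle=(Q\mathbf{1})_x=n-p_d(k)$, whence $(n-p_d(k))^2\le\|q_x\|^2\,(n-k_d(x))$. Dividing by $n-k_d(x)>0$, summing over all $x$, and using $\sum_x\|q_x\|^2=\operatorname{tr}(Q^2)=n\|Q\|^2=n(n-p_d(k))$ yields
\begin{equation*}
(n-p_d(k))^2\sum_{x\in V(\Gamma)}\frac{1}{\,n-k_d(x)\,}\ \le\ n\,(n-p_d(k)).
\end{equation*}
Since $p_d(k)\le n-p_0(k)=n-1$, the factor $n-p_d(k)$ is positive, and dividing by it and rearranging produces exactly \eqref{ineq xp.ex.thm}.

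Finally I would analyze equality. Equality in \eqref{ineq xp.ex.thm} forces the vertexwise Cauchy--Schwarz to be tight for every $x$, i.e. $q_x=c_x b_x$ for a scalar $c_x$; comparing entries and using symmetry of $Q$ shows $(Q)_{xy}=c_x$ is constant on all pairs with $\partial(x,y)\le d-1$, and connectivity of $\Gamma$ (adjacent vertices being at distance $1\le d-1$) forces a single constant $c$. Then $J-p_d(A)=c(J-A_d)$, so $A_d\in\mathbb{R}[A]$, which is equivalent to distance-regularity; the converse is immediate because a distance-regular graph has $A_d=p_d(A)$ and $k_d(x)=p_d(k)$ for every $x$. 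I expect the main obstacle to be the passage to the harmonic-mean form: a purely global orthogonal-projection argument only delivers the averaged bound $\overline{k_d}\le p_d(k)$, and upgrading it to \eqref{ineq xp.ex.thm} rests entirely on the per-vertex Cauchy--Schwarz above, whose legitimacy in turn depends on the degree/support fact that $Q$ annihilates every distance-$d$ pair. The remaining nonroutine input is the structural characterization $A_d\in\mathbb{R}[A]\iff$ distance-regular, which I would cite rather than reprove.
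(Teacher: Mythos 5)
Your argument is correct: the per-vertex Cauchy--Schwarz comparison of the column of $Q=\sum_{i=0}^{d-1}p_i(A)$ with the $0/1$ indicator of $\{y:\partial(x,y)\le d-1\}$, combined with $\mathrm{tr}(Q^2)=n(n-p_d(k))$, yields exactly \eqref{ineq xp.ex.thm}, and your equality analysis (noting $c\ne 0$ since $\|Q\|^2=n-p_d(k)\ge 1$) correctly reduces to the characterization $A_d\in\mathbb{R}[A]$. The paper does not prove this lemma but only cites \cite{van2008}, and your proof is essentially van Dam's original argument from that reference, so there is nothing to compare beyond noting the agreement.
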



\section{A new feasibility condition}\label{sec:NFC}
In this section, we introduce a new feasibility condition for the $\mathrm{AT4}(p,q,r)$ family.
We use the following notation.
Let $\Gamma$ denote an antipodal tight graph $\mathrm{AT4}(p,q,r)$. 
Fix a vertex $x$ in $\Gamma$.
Choose a vertex $y$ in $\Gamma$ with $\partial(x,y)=2$.
Consider the antipodal class containing $y$, denoted by $\{y=y_1, y_2, \ldots, y_r\}$.
We define the subgraph $H$ of $\Gamma$ as the union of the $\mu$-graphs $\Gamma(x)\cap\Gamma(y_i)$ of $\Gamma$ for all $1\leq i \leq r$:
\begin{equation}\label{graph H}
	H:=\bigcup^r_{i=1} \Gamma(x)\cap\Gamma(y_i).
\end{equation}
Observe that $H$ is $p$-regular and $|V(H)|=q(p+q)$.

\begin{lem}\label{eig H subgraph}
Let $H$ be the graph as in \eqref{graph H}. Then the following {\rm(i)}--{\rm(iii)} hold.
\begin{enumerate}[\normalfont(i)]
	\item $H$ has $p$ as an eigenvalue of multiplicity $r$.
	\item $H$ has $-q$ as an eigenvalue of multiplicity at least $pq(1+p+q-q^2)/(p+q)$.
	\item $H$ has at least three distinct eigenvalues.
\end{enumerate}
\end{lem}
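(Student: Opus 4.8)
The plan is to first pin down the combinatorial shape of $H$ and then read off the three spectral claims from it. Write $M_i:=\Gamma(x)\cap\Gamma(y_i)$ for $1\le i\le r$, so that $H=\bigcup_{i=1}^r M_i$. Since $y_i$ and $y_j$ lie in a common antipodal class we have $\partial(y_i,y_j)=4$ for $i\ne j$, so no vertex can be adjacent to both; hence the $M_i$ are pairwise vertex-disjoint, which is also why $|V(H)|=r\,c_2=q(p+q)$. Each $M_i$ is a $\mu$-graph, so by Lemma \ref{lem thm3.1}(i) it is $c'$-regular with $c'=p$ (the local parameter of Lemma \ref{pps1}(ii)). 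Because $H$ itself is $p$-regular and $M_i\subseteq H$, every $v\in M_i$ already has its full complement of $p$ neighbours inside $M_i$; therefore $H$ has no edges between distinct $M_i$, and $H$ is the \emph{disjoint union} of the $r$ $\mu$-graphs $M_i$.

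For (i), note that $H$ is $p$-regular, so $p$ is its spectral radius and the multiplicity of $p$ equals the number of connected components of $H$. By the decomposition this number is $\sum_{i=1}^r(\text{number of components of }M_i)$, so the claim is equivalent to each $\mu$-graph $M_i$ being connected: the indicators $\mathbf 1_{M_1},\dots,\mathbf 1_{M_r}$ are $p$-eigenvectors with disjoint supports and give multiplicity $\ge r$ for free, and connectivity of the $M_i$ supplies the matching upper bound. This is the delicate direction, and I would justify it from the connectivity of the $\mu$-graphs of $\Gamma$.

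For (ii), I would use Cauchy interlacing against the local graph. By Lemma \ref{pps1}(ii), $\Delta(x)$ is strongly regular on $n'=q(pq+p+q)$ vertices with $-q$ an eigenvalue of multiplicity $\ell_2=pq(p+1)(q+1)/(p+q)$. As $H$ is the induced subgraph of $\Delta(x)$ on the $q(p+q)$ vertices of $V(H)$, the matrix $A_H$ is a principal submatrix of $A_{\Delta(x)}$ obtained by deleting $n'-|V(H)|=pq^2$ rows and columns, so interlacing forces
\[
\operatorname{mult}_{-q}(H)\ \ge\ \ell_2-pq^2\ =\ \frac{pq\bigl[(p+1)(q+1)-q(p+q)\bigr]}{p+q}\ =\ \frac{pq(1+p+q-q^2)}{p+q},
\]
which is exactly the bound in (ii). This is the clean main step and needs only the local spectrum together with the two vertex counts.

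For (iii), I would argue by contradiction: if $H$ had only two distinct eigenvalues then, being $p$-regular, it would be a disjoint union of cliques $K_{p+1}$, forcing each $\mu$-graph $M_i$ to be complete and hence $|V(H)|=r(p+1)$, i.e. the equality case $r(p+1)=q(p+q)$ of the bound in Lemma \ref{pps1}(iv). A complete $\mu$-graph is complete multipartite, so by Lemma \ref{lem mu-graph} (for $p>1$) one gets $(p,q,r)=(qs,q,q)$, whence $r(p+1)=q(qs+1)\ne q^2(s+1)=q(p+q)$ since $q\ge 2$, a contradiction; thus $H$ has at least three distinct eigenvalues. The hard part throughout is controlling the \emph{exact} multiplicity in (i) and excluding the genuinely degenerate low-parameter cases in (iii) (for instance $p=1$, where a $\mu$-graph can be a clique and $H$ collapses to a union of $K_{p+1}$'s): both reduce to knowing that the $\mu$-graphs are connected and non-complete, which is the key input I would need to secure.
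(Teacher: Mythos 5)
Your proposal follows the paper's proof essentially step for step. Part (i) is the same component count for the disjoint union of the $r$ $\mu$-graphs; part (ii) is the same interlacing argument against the local graph $\Delta(x)$ (the paper squeezes $\varepsilon_j$ between $\delta_{2+\ell_1}$ and $\delta_{n'}$, you phrase it as ``deleting $pq^2$ vertices lowers the multiplicity of $-q$ by at most $pq^2$''; these give the identical bound, since $\ell_2-pq^2=q(p+q)-1-\ell_1$); part (iii) is the same reduction to a complete $\mu$-graph followed by Lemma \ref{lem mu-graph}. One small repair to your setup: to see that $H$ is an induced subgraph of $\Delta(x)$ with no edges between $M_i$ and $M_j$ (which is what interlacing actually requires), argue directly from $\partial(y_i,y_j)=4$ --- an edge $uv$ with $u\in M_i$, $v\in M_j$ would give a walk $y_i,u,v,y_j$ of length $3$ --- rather than from the $p$-regularity of $H$, which under the induced-subgraph reading is itself the fact being established.

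The substantive point is the one you flag yourself: the exact multiplicity in (i) needs every $\mu$-graph to be connected, and (iii) needs the $\mu$-graph to be non-complete (Lemma \ref{lem mu-graph} assumes $p>1$), and you leave both inputs unsecured. The paper's own proof has exactly the same gap --- it simply asserts that $H$ has $r$ connected components and applies Lemma \ref{lem mu-graph} without restriction --- and your caution is warranted, because these inputs can genuinely fail: the paper's Table \ref{table1} lists $\mathrm{AT4}$ graphs with disconnected $\mu$-graphs (Soicher1 with $\mu$-graph $2\cdot K_{2,2}$, Meixner1 with $2\cdot K_{3\times 4}$), for which the multiplicity of $p$ in $H$ strictly exceeds $r$, and for the Conway--Smith graph $\mathrm{AT4}(1,2,3)$ the $\mu$-graph is $K_2$, so $H=3\cdot K_2$ has only two distinct eigenvalues. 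What is actually provable, and what both your argument and the paper's deliver, is ``multiplicity at least $r$'' in (i) and statement (iii) for $p>1$; the at-least-$r$ version is all the Cauchy--Schwarz inequality in Theorem \ref{thmcon} needs. So: same approach as the paper, with the same unproven step, which you at least identify honestly rather than assert.
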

\begin{proof}
(i) Since $H$ has $r$ connected components and each component is $p$-regular, the result follows. \\
(ii) Consider the local graph $\Delta=\Delta(x)$ of $\Gamma$.
By Lemma \ref{pps1}(ii), $\Delta$ is strongly regular with parameters $(n',k',a',c')$ and the spectrum \eqref{local srg spec}. 
Denote the eigenvalues of $\Delta$ by $\delta_1\geq \delta_2 \geq \cdots \geq \delta_{n'}$, where $n'=q(pq+p+q)$.
By \eqref{local srg spec}, we find that $\delta_i=-q$ for all $2+\ell_1\leq i \leq n'$, where $\ell_1$ is from \eqref{ell1,2}.
Denote the eigenvalues of $H$ by $\varepsilon_1 \geq \varepsilon_2 \geq \cdots \geq \varepsilon_m$, where $m=|V(H)|=q(p+q)$.
Since $H$ is a subgraph of $\Delta$, by interlacing we have $\delta_i \geq \varepsilon_i \geq \delta_{n'-m+i}$ for $1\leq i \leq m$.
Evaluate these inequalities at $i=2+\ell_1$ and $i=m$, respectively, and combine the two results to get
$$
	-q=\delta_{2+\ell_1} \geq \varepsilon_{2+\ell_1} \geq \varepsilon_m \geq  \delta_{n'}=-q.
$$
From this, it follows that $\varepsilon_j=-q$ for all $2+\ell_1 \leq j \leq m$.
Thus, the multiplicity of $-q$ is at least $q(p+q)-1-\ell_1$.
Simplify this quantity to get the desired result.\\
(iii) Since the $\mu$-graph is a subgraph of $H$, it suffices to show that the $\mu$-graph has diameter at least $2$. 
If the $\mu$-graph has diameter $1$, then it must be the complete graph $K_{p+1}$.
Since $|V(H)|=q(p+q)=r(p+1)$ and by Lemma \ref{lem mu-graph}, we have $q=1$, a contradiction.
Therefore, the $\mu$-graph has diameter at least $2$, as desired.
\end{proof}

\begin{thm}\label{thmcon}
Let $\Gamma$ be an antipodal tight graph $\mathrm{AT4}(p,q,r)$. Then
\begin{equation}\label{eqcon}
	r\leq\frac{p+q^3}{p+q}.
\end{equation}
If the equality holds, then the $\mu$-graph of $\Gamma$ is strongly regular with parameters 
\begin{equation}\label{mu srg para}
	\left(\frac{q(p+q)}{r},p,(q-1)(q-2)+\frac{2(p-1)}{q+1},\frac{p+q^3}{q+1}\right).
\end{equation}
\end{thm}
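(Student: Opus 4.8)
The plan is to read everything off the auxiliary graph $H$ of \eqref{graph H} using only the three spectral facts in Lemma~\ref{eig H subgraph}. Since $H$ is $p$-regular, its largest eigenvalue is $p$, attained with multiplicity equal to the number of components, namely $r$ (Lemma~\ref{eig H subgraph}(i)); since $H$ is an induced subgraph of the local graph $\Delta$, whose least eigenvalue is $-q$, interlacing together with Lemma~\ref{eig H subgraph}(ii) shows that $-q$ is the least eigenvalue of $H$, attained with multiplicity $f\ge \nu:=pq(1+p+q-q^2)/(p+q)$. Writing $m=|V(H)|=q(p+q)$ and $s=p+q$, the remaining eigenvalues $\varepsilon_j$ lie in the open interval $(-q,p)$, and by Lemma~\ref{eig H subgraph}(iii) there is at least one of them, so their number $g=m-r-f$ satisfies $g\ge 1$. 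The idea is to shift by $q$ and control the shifted middle eigenvalues $u_j:=\varepsilon_j+q\in(0,s)$.

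First I would extract the first two power sums of the $u_j$ from the traces of $A_H$ and $A_H^2$. Because $\operatorname{tr}A_H=0$ and $\operatorname{tr}A_H^2=mp=pq(p+q)$, and because the eigenvalues $p$ and $-q$ each contribute $0$ to $\sum_i(\varepsilon_i+q)(p-\varepsilon_i)$, one obtains
\begin{equation*}
\sum_j u_j=s(q^2-r),\qquad \sum_j u_j^2=s\sum_j u_j-R,\qquad R:=pq(p+q)(q-1),
\end{equation*}
and I note that $\sum_j u_j$ does not involve $f$. Now apply Cauchy--Schwarz, $\big(\sum_j u_j\big)^2\le g\sum_j u_j^2$, together with $\sum_j u_j^2\ge 0$ and the multiplicity estimate $g=m-r-f\le m-r-\nu$, to obtain
\begin{equation*}
\Big(s(q^2-r)\Big)^2\le (m-r-\nu)\big(s^2(q^2-r)-R\big).
\end{equation*}
I expect the $r^2$-terms on the two sides to cancel, so that this is in fact a linear inequality in $r$; its unique root is exactly $(p+q^3)/(p+q)$ and its slope is negative, which is precisely \eqref{eqcon}.

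For the equality statement, $r=(p+q^3)/(p+q)$ forces equality in both estimates at once: the Cauchy--Schwarz step becomes an equality, so all the $u_j$ (hence all middle eigenvalues $\varepsilon_j$) equal a single value $\theta$, and $f=\nu$. Thus $H$ has exactly the three eigenvalues $p,\theta,-q$, and each of its $r$ components---each a $\mu$-graph---is a connected regular graph with three distinct eigenvalues, hence strongly regular with non-trivial eigenvalues $\theta$ and $-q$. Evaluating $g\theta=fq-rp$ at equality yields $\theta=(p-q^2)/(q+1)$, and converting the spectral data of a $p$-regular strongly regular graph on $q(p+q)/r$ vertices with eigenvalues $p,\theta,-q$ into the usual parameters $(n,k,a,c)$ produces \eqref{mu srg para}.

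The step I anticipate to be the main obstacle is the bookkeeping in the middle paragraph: one must confirm that the quadratic-in-$r$ terms genuinely cancel (so the decisive inequality is linear), keep the signs straight---$\sum_j u_j^2\ge 0$ is automatic, while $\nu$ may be negative, in which case the multiplicity bound is vacuous but \eqref{eqcon} still follows---and check that the surviving linear inequality is oriented so as to give an upper bound on $r$. A secondary delicate point is the equality analysis: one needs that $f=\nu$ and the Cauchy--Schwarz equality are forced simultaneously (which uses $g\ge 1$, so that $\sum_j u_j^2>0$), followed by a short verification that $\theta=(p-q^2)/(q+1)$ is consistent with the strongly regular parameters displayed in \eqref{mu srg para}.
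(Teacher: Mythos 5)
Your proposal follows essentially the same route as the paper: the same auxiliary graph $H$ of \eqref{graph H}, the same three spectral facts from Lemma~\ref{eig H subgraph}, the traces of the adjacency matrix and its square, Cauchy--Schwarz applied to the eigenvalues other than $p$ and $-q$, and the same equality analysis producing a connected regular graph with three distinct eigenvalues, hence a strongly regular $\mu$-graph with the stated parameters. Your shift $u_j=\varepsilon_j+q$ is a tidy refinement rather than a different method---it makes both power sums independent of the exact multiplicity of $-q$, so the ``at least'' in Lemma~\ref{eig H subgraph}(ii) (including the case where the bound $pq(1+p+q-q^2)/(p+q)$ is nonpositive) is handled automatically, whereas the paper substitutes that lower bound into the power sums and instead verifies $pq-r(p+q)+q^3>0$ case by case via Lemma~\ref{pps2} before solving \eqref{eq1} for $r$.
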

\begin{proof}
Let $H$ be the subgraph of $\Gamma$ as in \eqref{graph H}.
By Lemma \ref{eig H subgraph}(i), (ii), $H$ has eigenvalues $p$ with multiplicity $r$ and $-q$ with multiplicity at least $pq(1+p+q-q^2)/(p+q)$, denoted by $\sigma$.
By Lemma \ref{eig H subgraph}(iii), $H$ has (possibly repeated) eigenvalues distinct from $p$ and $-q$, denoted by $\lambda_1,\lambda_2,\ldots,\lambda_\tau$ for some $\tau$.
Since $|V(H)|=q(p+q)$, we have
\begin{equation}\label{pf thm3.2 eq1}
	\tau=q(p+q)-r-\sigma.
\end{equation}
Let $B$ denote the adjacency matrix of $H$.
Then one readily checks that $\mathrm{tr}(B)=0$ and $\mathrm{tr}(B^2)=pq(p+q)$.
Using these equations and linear algebra, we have
\begin{equation}\label{pf thm3.2 eq2}
	\sum_{i=1}^{\tau} \lambda_i  = \sigma q - rp, \qquad
	\sum_{i=1}^{\tau} \lambda_i^2  = pq(p+q) - rp^2 - \sigma q^2.
\end{equation}
By the Cauchy-Schwartz inequality, we have 
\begin{equation}\label{CSineq}
	\left(\sum_{i=1}^{\tau} \frac{\lambda_i}{\tau} \right)^2
	\leq
	\sum_{i=1}^{\tau} \frac{\lambda_i^2}{\tau}. 
\end{equation}
Evaluate \eqref{CSineq} using \eqref{pf thm3.2 eq1} and \eqref{pf thm3.2 eq2} and simplify the result to get
\begin{equation}\label{eq1}
	\frac{q^2-1}{p+q}+\frac{q(p+q)-r(p+1)}{pq-r(p+q)+q^3}\geq1.
\end{equation}
Verify $pq-r(p+q)+q^3>0$ by considering each case of Lemma \ref{pps2}.
Using this inequality, solve \eqref{eq1} for $r$ and simplify the result to obtain \eqref{eqcon}.

For the second assertion, the equality in \eqref{eqcon} holds if and only if the equality in \eqref{CSineq} holds if and only if there exists $\nu\in \mathbb{R}$ such that $\lambda_i = \nu \tau^{-1}$ for all $i$ $(1\leq i \leq \tau)$.
Thus, if the equality holds in \eqref{eqcon}, then we find that the $\mu$-graph has three distinct eigenvalues: $p$, $-q$, and $\nu \tau^{-1}$.
Therefore, the $\mu$-graph is strongly regular.
The parameters \eqref{mu srg para} follow routinely.
\end{proof}

\begin{ex}
Consider the $\mathrm{AT}4(351, 9, 3)$ graph.
One checks that $r=(p+q^3)(p+q)^{-1}$ with $p=351,q=9,r=3$.
By Theorem \ref{thmcon}, the $\mu$-graph of $\mathrm{AT}4(351, 9, 3)$ is strongly regular with parameters $(1080, 351, 126, 108)$; see Table \ref{table1}.
\end{ex}

\begin{rmk}
The converse of the second statement in Theorem \ref{thmcon} does not hold in general.
For example, the $\mu$-graph of $\mathrm{AT}4(2,2,2)$ is $K_{2,2}$, which is a strongly regular graph. 
However, equality in \eqref{eqcon} does not hold since $r =2 \ne 5/2 = (p+q^3)(p+q)^{-1}$.
\end{rmk}

\begin{rmk}\label{spec mu graph}
In the proof of Theorem \ref{thmcon}, we saw that the $\mu$-graph has the eigenvalue $\lambda:=\nu \tau^{-1}$ when equality holds in \eqref{eqcon}.
By the equation on the left in \eqref{pf thm3.2 eq2} it follows that $\lambda = (\sigma q - rp)\tau^{-1}$.
Evaluate this equation using \eqref{pf thm3.2 eq1} to get $\lambda=(p-q^2)(1+q)^{-1}$. 
Observe that $p>\lambda>-q$.
The spectrum of the $\mu$-graph is 
\begin{equation*}
	\left(
	\begin{array}{ccc}
	p & \frac{p-q^2}{1+q} & -q \\[5pt]
	1 & \frac{p(q-1)(q+1)^2}{p+q^3} & \frac{pq(1+p+q-q^2)}{p+q^3}
	\end{array}
	\right).
\end{equation*}
\end{rmk}

\begin{cor}\label{corrq}
For an antipodal tight graph $\mathrm{AT4}(qs,q,q)$, we have $s\leq q$.
\end{cor}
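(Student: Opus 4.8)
The plan is to derive this bound as an immediate consequence of the new feasibility condition established in Theorem \ref{thmcon}, rather than through any fresh combinatorial or spectral argument. The graph $\mathrm{AT4}(qs,q,q)$ is precisely the member of the family with $p=qs$ and $r=q$, so the whole proof amounts to specializing inequality \eqref{eqcon} to these values and simplifying.

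Concretely, I would first substitute $p=qs$ and $r=q$ into \eqref{eqcon}, obtaining
$$q \leq \frac{qs+q^3}{qs+q}.$$
Factoring $q$ out of the numerator and the denominator on the right-hand side reduces this to
$$q \leq \frac{s+q^2}{s+1}.$$
Since $s+1>0$ (indeed $\alpha=s+1$ is a positive intersection number by Lemma \ref{pps2}(iii)), I would clear the denominator to get $q(s+1)\leq s+q^2$ and rearrange the terms to
$$s(q-1)\leq q(q-1).$$
Finally, because $q\geq 2$ by Lemma \ref{pps1}(iv), the factor $q-1$ is strictly positive, so dividing both sides by $q-1$ preserves the inequality and yields $s\leq q$, as desired.

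There is essentially no genuine obstacle to overcome: the argument is a one-line substitution followed by elementary algebra. The only points that require care are bookkeeping ones, namely confirming that Theorem \ref{thmcon} is available for every $\mathrm{AT4}(p,q,r)$ (it is, with no extra hypotheses), and checking that $q-1>0$ so that the concluding division does not reverse the inequality. Both are guaranteed by the standing integrality and positivity constraints on $\mathrm{AT4}(p,q,r)$ recorded in Lemma \ref{pps1}(iv).
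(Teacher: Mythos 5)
Your proposal is correct and is exactly the paper's argument: the paper's proof of Corollary \ref{corrq} simply states that the result follows from \eqref{eqcon}, and your substitution $p=qs$, $r=q$ followed by the elementary algebra $s(q-1)\leq q(q-1)$ with $q\geq 2$ is precisely the intended computation. Nothing is missing.
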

\begin{proof}
The result follows from \eqref{eqcon}.
\end{proof}

We have a comment on a bound for $p$.
By the first expression in Lemma \ref{pps1}(iv)(3), we find an upper bound for $p$, namely, $p\leq q^4-q^2-q$. 
In the following, by using Theorem \ref{thmcon} we obtain a better bound for $p$.

\begin{cor}\label{corp}
For an antipodal tight graph $\mathrm{AT4}(p,q,r)$, we have $p\leq q^3-2q$.
\end{cor}
\begin{proof}
By Theorem \ref{thmcon} and since $r\geq 2$, we have $2\leq (p+q^3)(p+q)^{-1}$.
The result follows.
\end{proof}

\section{The graph $\mathrm{AT4}(q^3-2q,q,2)$}\label{sec:AT4 p,q,2}
In this section we discuss the antipodal tight graphs $\mathrm{AT4}(p,q,2)$ and their second subconstituent graphs.
We find the spectrum of this second subconstituent of $\mathrm{AT4}(p,q,2)$.
We then give a necessary and sufficient condition for this second subconstituent to be an antipodal tight graph with diameter four.
Let $\Gamma$ denote an antipodal tight graph $\mathrm{AT4}(p,q,2)$.
For $0\leq i \leq 4$, consider the $i$-th subconstituent $\Delta_i=\Delta_i(x)$ of $\Gamma$ with respect to a vertex $x$ in $\Gamma$.
Note that $\Delta_1$ is isomorphic to $\Delta_3$. 
For $0\leq i \leq 4$ let $k_i$ denote the cardinality of the vertex set of $\Delta_i$.
One readily finds that
\begin{align}
	&  k_0=k_4 =1, \qquad k_1=k_3 =q(pq+p+q), \\[5pt]
	& k_2 = \frac{2(pq+p+q)(q^2-1)(p+1)}{p+q}. \label{k2}
\end{align}
By Lemma \ref{pps1}(ii), we see that $\Delta_i$ ($i=1,3$) is strongly regular and its spectrum is given by \eqref{local srg spec}.
We now discuss the spectrum of $\Delta_2$ in detail.
To this end, we begin with the following lemma that will be used shortly.

\begin{lem}\label{lem tr(B^n)}
For a given vertex $x$ of $\Gamma$, let $B$ denote the adjacency matrix of the second subconstituent $\Delta_2(x)$ of $\Gamma$.
Then 
\begin{enumerate}[\normalfont(i)]
	\item $\mathrm{tr}(B)=0$,
	\item $\mathrm{tr}(B^2)=\dfrac{2pq^2(pq+p+q)(q^2-1)(p+1)}{p+q}$,
	\item $\mathrm{tr}(B^3)=\dfrac{2pq^3(pq+p+q)(q^2-1)(p^2-1)}{p+q}$.
\end{enumerate}
\end{lem}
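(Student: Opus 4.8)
The plan is to compute the traces $\mathrm{tr}(B^n)$ for $n=1,2,3$ by counting closed walks of length $n$ in the second subconstituent $\Delta_2$. Recall that for any graph with adjacency matrix $B$, the quantity $\mathrm{tr}(B^n)$ equals the number of closed walks of length $n$, and each such walk lives entirely inside $\Gamma_2(x)$. So the strategy is to express each trace as (number of vertices in $\Delta_2$) times (the number of closed walks of the appropriate length starting at a fixed vertex $y\in\Gamma_2(x)$), using the vertex-transitivity-like regularity afforded by distance-regularity. Since $k_2=|V(\Delta_2)|$ is already recorded in \eqref{k2}, the bulk of the work is a local count of short closed walks within $\Gamma_2(x)$.

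For part (i), the trace of $B$ vanishes because $\Delta_2$ has no loops; this is immediate. For part (ii), I would use $\mathrm{tr}(B^2)=\sum_{y}\deg_{\Delta_2}(y)=k_2\cdot a_2$, where $a_2$ is the valency of the second subconstituent, since each vertex $y\in\Gamma_2(x)$ has exactly $a_2=|\Gamma_2(x)\cap\Gamma(y)|$ neighbors that remain in $\Gamma_2(x)$. By \eqref{AT4 int num ai} we have $a_2=pq^2$, and substituting $k_2$ from \eqref{k2} yields the stated expression after simplification. For part (iii), $\mathrm{tr}(B^3)$ counts closed walks of length $3$, i.e.\ six times the number of triangles in $\Delta_2$, equivalently $\mathrm{tr}(B^3)=k_2\cdot(\text{number of edges of }\Delta_2\text{ inside }\Gamma(y)\cap\Gamma_2(x))$. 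The key local count is: for a fixed $y\in\Gamma_2(x)$, how many pairs of adjacent vertices $z,w$ lie in $\Gamma_2(x)\cap\Gamma(y)$? This requires knowing, among the $a_2$ neighbors of $y$ that stay in $\Gamma_2(x)$, how many edges they span — a triple-intersection count controlled by the $1$-homogeneity of $\Gamma$ (Lemma \ref{pps1}(iii)).

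I expect the main obstacle to be the $\mathrm{tr}(B^3)$ computation, since it is genuinely a triangle count and not merely a degree sum. The natural approach is to count, for each edge $\{z,w\}$ of $\Delta_2$, the number of common neighbors of $z,w$ that also lie in $\Gamma_2(x)$; this is a triple intersection number relative to the three vertices $x,z,w$ and can be extracted from the $1$-homogeneous structure of $\mathrm{AT4}(p,q,2)$, where the relevant intersection diagram assigns a constant value to such configurations. Concretely, I would use the known intersection array from Lemma \ref{pps1}(i) together with the constant $\alpha=(p+q)/r=(p+q)/2$ to determine the number of triangles through a typical edge of $\Delta_2$, then multiply by the number of such edges. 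An alternative and perhaps cleaner route is to use the already-established spectral data: by Lemma \ref{eig H subgraph} and Theorem \ref{thmcon} one has detailed control of eigenvalue multiplicities for the associated $\mu$-graphs, and the global quantities $\sum_i\theta_i^n m_i$ relate the subconstituent traces to the eigenvalues \eqref{nontv eig} of $\Gamma$ via the local eigenvalue decomposition; matching the power-sum expressions then forces the claimed formula.

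Once the three traces are in hand, they serve as the power sums $\sum_j \varepsilon_j^n$ of the spectrum of $\Delta_2$, and combined with $|V(\Delta_2)|=k_2$ they determine the spectrum of $\Delta_2$ (assuming $\Delta_2$ has few distinct eigenvalues, which is the point of the section). Thus the lemma is the computational engine for the subsequent spectral analysis of $\Delta_2$. The verification that each trace formula is correct reduces, after the local counts are fixed, to algebraic simplification using the explicit intersection numbers of $\mathrm{AT4}(p,q,2)$; I would carry out each simplification by substituting the intersection array values and clearing the common denominator $p+q$.
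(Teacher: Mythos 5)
Your proposal is correct and follows essentially the same route as the paper: $\mathrm{tr}(B^2)=a_2k_2$ as a degree sum, and $\mathrm{tr}(B^3)$ as (directed edges of $\Delta_2$) times the number of triangles on a fixed edge, which the paper computes as $h=a_1-\alpha r=a_1-2\alpha$ using the triple intersection number $\alpha=(p+q)/2$ from $1$-homogeneity, exactly the mechanism you describe. One caution: your ``alternative'' route via the spectrum of $\Delta_2$ would be circular, since Lemma \ref{lemeig} derives that spectrum from the present lemma.
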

\begin{proof}
(i) Clear.\\
(ii) Observe that $\mathrm{tr}(B^2)$ is the total number of closed $2$-walks in $\Delta_2$. This number is equal to $a_2 k_2$. Evaluate this using \eqref{AT4 int num ai} and \eqref{k2}.\\
(iii) Observe that $\mathrm{tr}(B^3)$ is the total number of directed $3$-cycles in $\Delta_2$.
This number is equal to $a_2k_2h$, where $h$ is the number of triangles containing one given edge in $\Delta_2$. 
By construction, we find $h=a_1-\alpha r$, where $r=2$ and $\alpha=(p+q)/2$ by Lemma \ref{pps1}(iii).
Evaluate $a_2k_2(a_1-2\alpha)$ using \eqref{AT4 int num ai} and \eqref{k2}.
\end{proof}

\begin{lem}\label{lemeig}
Let $\Gamma$ denote an antipodal tight graph $\mathrm{AT4}(p,q,2)$. For each vertex $x\in V(\Gamma)$, the spectrum of the second subconstituent $\Delta_2(x)$ is
\begin{equation}\label{eq spec AT4(p,q,2)}
\begin{pmatrix}
 pq^2& pq & p+q-q^2 & p & -q & -q^2 \\[5pt]
1 & \dfrac{(q^2-1)(pq+p+q)}{p+q} & \dfrac{pq(p+1)(q+1)}{p+q}& m_1 & m_2 & m_3 \\
\end{pmatrix},
\end{equation}
where
\begin{align}
&m_1=\frac{-q(p+1)(p-q^3+2q)(pq+p+q)}{(p + q)(p+q^2 )},\label{scalar m1}\\
 &m_2= \frac{p(q^2-1)(pq+p+q)}{p + q},\label{scalar m2}\\
 &m_3= \frac{p(p-q+2)(q^2-1)(pq+p+q)}{(p+q)(p+q^2)}.\label{scalar m3}
\end{align}
\end{lem}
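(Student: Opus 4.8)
The plan is to pin down the spectrum \eqref{eq spec AT4(p,q,2)} in two stages: first identify the six distinct eigenvalues of $B$ together with the three ``structural'' multiplicities $1$, $\ell_1$, $\ell_2$, and then recover the remaining multiplicities $m_1, m_2, m_3$ from the trace data of Lemma \ref{lem tr(B^n)}.

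For the first stage, note that $\Delta_2$ is $a_2$-regular with $a_2 = pq^2$ by \eqref{AT4 int num ai}, so $pq^2$ is the valency eigenvalue of multiplicity $1$ (as $\Delta_2$ is connected). The three values $p=\theta_2$, $-q=\theta_3$, $-q^2=\theta_4$ are exactly the ``tail'' eigenvalues of $\Gamma$ from Lemma \ref{pps1}(i), and they occur as eigenvalues of the second subconstituent through the thin irreducible modules of the Terwilliger algebra $T(x)$; here I would use that $\Gamma$ is $1$-homogeneous (Lemma \ref{pps1}(iii)) and tight, so that the $T(x)$-modules are thin and their action on $E_2^* A E_2^*$ is completely controlled. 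The two further eigenvalues $pq$ and $p+q-q^2$ arise from the endpoint-one modules attached to the nontrivial local eigenvalues $p$ and $-q$ of the strongly regular graph $\Delta_1$ (see \eqref{local srg spec}); since each such module meets $E_1^*$ and $E_2^*$ in one dimension, these eigenvalues inherit the local multiplicities $\ell_1$ and $\ell_2$ of \eqref{ell1,2}, which is precisely the claimed assignment.

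For the second stage, I treat $m_1, m_2, m_3$ as unknowns and impose the moment equations $\mathrm{tr}(B^0) = k_2$, $\mathrm{tr}(B) = 0$, and $\mathrm{tr}(B^2)$ from Lemma \ref{lem tr(B^n)}(i)--(ii), using \eqref{k2} for $k_2$. Substituting the six eigenvalues together with the known multiplicities $1, \ell_1, \ell_2$ reduces these to a linear system whose coefficient matrix for $(m_1, m_2, m_3)$ is the Vandermonde matrix on the nodes $p, -q, -q^2$. Since $p \geq 1$ and $q \geq 2$ force these three nodes to be distinct, the system is nonsingular and yields unique values for $m_1, m_2, m_3$; simplifying the resulting rational expressions gives \eqref{scalar m1}--\eqref{scalar m3}. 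I would then confirm consistency by checking that these multiplicities reproduce $\mathrm{tr}(B^3)$ from Lemma \ref{lem tr(B^n)}(iii), which simultaneously validates that the eigenvalue list was complete.

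The main obstacle is the first stage, namely justifying that the full set of distinct eigenvalues is exactly $\{pq^2, pq, p+q-q^2, p, -q, -q^2\}$ and that $pq, p+q-q^2$ carry the multiplicities $\ell_1, \ell_2$; this is where the tight, $1$-homogeneous structure (and the thin-module decomposition of $T(x)$) does the real work, whereas the passage to $m_1, m_2, m_3$ is then routine linear algebra. As an alternative to the module bookkeeping, one could instead exploit the antipodal involution $\iota$: for $r=2$ it fixes $\Gamma_2(x)$ setwise and commutes with $B$, so $B$ splits into its actions on the $\iota$-symmetric and $\iota$-antisymmetric subspaces, and the eigenvalues can be read off from the strongly regular antipodal quotient. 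Either route carries the same conceptual weight and feeds into the same trace computation.
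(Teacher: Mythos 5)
Your overall strategy---identify the candidate eigenvalues and the multiplicities $1$, $\ell_1$, $\ell_2$ attached to $pq^2$, $pq$, $p+q-q^2$, then determine the remaining multiplicities by a moment (trace) computation---is the same as the paper's. The paper imports the first stage from the proof of \cite[Lemma 8.5]{klt2022}, which is essentially the Terwilliger-algebra bookkeeping you sketch. However, there is a genuine gap in your first stage: the cited lemma does \emph{not} give exactly six distinct eigenvalues. It gives \emph{at most seven} candidates, the seventh being $\theta_1 = pq+p+q$, which is just as much a ``tail'' eigenvalue of $\Gamma$ as $p$, $-q$, $-q^2$ and arises from the same endpoint-$2$ module considerations you invoke; nothing in your sketch rules it out a priori. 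Because you drop this candidate, your linear system is the $3\times 3$ Vandermonde system on the nodes $p,-q,-q^2$, which silently presupposes that the multiplicity of $pq+p+q$ is zero. The paper instead keeps a fourth unknown $m_0$ for $pq+p+q$ and solves the $4\times 4$ system coming from $\mathrm{tr}(B^j)$ for $j=0,1,2,3$; the conclusion $m_0=0$ is an \emph{output} of that computation, and the $\mathrm{tr}(B^3)$ equation of Lemma \ref{lem tr(B^n)}(iii) is load-bearing, not a sanity check.

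Your proposed ``consistency check'' against $\mathrm{tr}(B^3)$ can in fact be upgraded to close this gap: once you know the eigenvalues lie in the seven-element candidate set, a solution of the three moment equations that also satisfies the fourth one coincides, by nonsingularity of the $4\times4$ Vandermonde system on the distinct nodes $pq+p+q,\,p,\,-q,\,-q^2$, with the unique solution of the full system, forcing $m_0=0$. But as written you present that check as optional confirmation that ``the eigenvalue list was complete,'' which it cannot establish against arbitrary extra eigenvalues; it only works relative to the a priori bound of seven candidates, and establishing that bound (together with the assignment of $\ell_1,\ell_2$ to $pq$ and $p+q-q^2$) is precisely the nontrivial content that the paper outsources to \cite[Lemma 8.5]{klt2022}. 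So either cite that result and carry the seventh candidate through the computation, or supply a genuine argument that $pq+p+q$ does not occur; the rest of your proposal is then correct.
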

\begin{proof}
By the proof of \cite[Lemma 8.5]{klt2022}, $\Delta_2=\Delta_2(x)$ has at most seven distinct eigenvalues, denoted by
$$
	pq^2, \quad pq, \quad p+q-q^2, \quad  pq+p+q, \quad p,\quad  -q, \quad -q^2.
$$
Also, by the proof of \cite[Lemma 8.5]{klt2022} the multiplicity of $pq$ (resp. $p+q-q^2$) is equal to the multiplicity of the eigenvalue $p$ (resp. $-q$) of $\Delta(x)$.
By these comments and Lemma \ref{pps1}(ii), we may denote the spectrum of $\Delta_2$ by
\begin{equation*}
\begin{pmatrix}
 pq^2& pq & p+q-q^2 & \theta_1 & \theta_2 & \theta_3 & \theta_4 \\[5pt]
1 & \ell_1 & \ell_2 & m_0 & m_1 & m_2 & m_3 
\end{pmatrix},
\end{equation*}
where $\{\theta_i\}^4_{i=1}$ are from \eqref{nontv eig} and $\ell_1, \ell_2$ are from \eqref{ell1,2}.

We find the multiplicities $m_i$ $(0\leq i \leq 3)$.
Let $B$ denote the adjacency matrix of $\Delta_2$.
By linear algebra, we have
\begin{equation}\label{pf trace B^n}
	\mathrm{tr}(B^j) = (pq^2)^{j} + (pq)^{j}\ell_1 +  (p+q-q^2)^{j}\ell_2 + \sum^3_{i=0} m_i\theta_{i+1}^j,
\end{equation}
for a nonnegative integer $j$.
For each $j=0,1,2,3$, evaluate \eqref{pf trace B^n} using \eqref{k2} and Lemma \ref{lem tr(B^n)} to get a system of four linear equations in four variables $m_0, m_1, m_2, m_3$.
Solve this system of equations to get
\begin{align*}
&m_0=0,\\
&m_1=\frac{-q(p+1)(p-q^3+2q)(pq+p+q)}{(p + q)(p+q^2 )},\\
 &m_2= \frac{p(q^2-1)(pq+p+q)}{p + q},\\
 &m_3= \frac{p(p-q+2)(q^2-1)(pq+p+q)}{(p+q)(p+q^2)}.
\end{align*}
The result follows.
\end{proof}

\begin{rmk}\label{rmk multi}
We verify that the expressions of $\{m_i\}^3_{i=1}$ in \eqref{scalar m1}--\eqref{scalar m3} are integers.
Recall the integers $\ell_1$, $\ell_2$ from \eqref{ell1,2}.
Observe that the expression on the right in \eqref{scalar m2} is equal to $p\ell_1$, which is a positive integer.
Note that the expression on the right in \eqref{scalar m3} can be expressed as
\begin{equation}\label{alt m3}
	(q^2-1)(pq+p-q^3-3q^2+q+2)-\frac{2q^2(q^2-1)}{p+q}+\frac{q^2(q^2-1)(q^2+q-1)(q+2)}{p+q^2}.
\end{equation}
By Lemma \ref{pps1}(iv)(3), the expression \eqref{alt m3} is an integer.
By these comments, it follows that the expression on the right in \eqref{scalar m1} becomes an integer.
\end{rmk}

We now give a necessary and sufficient condition for the second subconstituent of $\Gamma$ to be antipodal tight.

\begin{thm}\label{thm4.2}
Let $\Gamma$ denote an antipodal tight graph $\mathrm{AT4}(p,q,2)$. 
For each vertex $x\in V(\Gamma)$, the second subconstituent $\Delta_2(x)$ of $\Gamma$ is an antipodal distance-regular graph with diameter four if and only if $p=q^3-2q$. 
Moreover, $\Delta_2(x)$ is an $\mathrm{AT4}(q^3-q^2-q,q,2)$ graph when $p=q^3-2q$.
\end{thm}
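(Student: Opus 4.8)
The plan is to decide distance-regularity of $\Delta_2(x)$ directly from its spectrum, which is already in hand from Lemma~\ref{lemeig}, by invoking the spectral excess theorem (Lemma~\ref{spectral excess thm}). The organizing observation is that among the multiplicities in \eqref{eq spec AT4(p,q,2)} the numbers $\ell_1,\ell_2$ and $m_2=p\ell_1$ are always positive, while $m_1$ vanishes exactly when $p=q^3-2q$ (the factor $p-q^3+2q$ in \eqref{scalar m1}) and $m_3$ vanishes exactly when $p=q-2$ (the factor $p-q+2$ in \eqref{scalar m3}); moreover two of the six listed eigenvalues collide only in the case $p+q-q^2=-q$, i.e. $p=q^2-2q$. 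Since a distance-regular graph of diameter four has exactly five distinct eigenvalues, these three values of $p$ are the only candidates, and the task splits into producing the genuine one, $p=q^3-2q$, and discarding the other two.

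For sufficiency I would set $p=q^3-2q$, so that $m_1=0$ and $\Delta_2(x)$ has precisely the five eigenvalues $q^5-2q^3,\ q^4-2q^2,\ q^3-q^2-q,\ -q,\ -q^2$; substituting into Lemma~\ref{pps1}(i) identifies these as the eigenvalues of $\mathrm{AT4}(q^3-q^2-q,q,2)$. The valency $a_2=pq^2$ occurs with multiplicity one, so $\Delta_2(x)$ is connected and regular of valency $a_2$, and having five distinct eigenvalues forces its diameter to be at most four. The combinatorial input is the antipode map: if $y\in\Gamma_2(x)$ then its $\Gamma$-antipode $y'$ again lies in $\Gamma_2(x)$, since $\partial_{\Gamma}(x,y')=4-\partial_{\Gamma}(x,y)=2$ in an antipodal $2$-cover, and as every walk inside $\Delta_2(x)$ is a walk in $\Gamma$ we get $\partial_{\Delta_2}(y,y')\ge\partial_{\Gamma}(y,y')=4$; with the diameter bound this gives $\partial_{\Delta_2}(y,y')=4$, so $k_4(z)\ge 1$ for every vertex $z$ of $\Delta_2(x)$. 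I would then compute the spectral excess $p_4(a_2)$ from these five eigenvalues and their multiplicities and verify $p_4(a_2)=1$. Writing $n:=k_2=|V(\Delta_2(x))|$, the left-hand side of \eqref{ineq xp.ex.thm} is $n-1$, while $k_4(z)\ge 1$ together with the arithmetic--harmonic mean inequality bounds the right-hand side above by $n-1$; the two estimates squeeze Lemma~\ref{spectral excess thm} into equality, so $\Delta_2(x)$ is distance-regular with $k_4(z)=1$ for all $z$. A diameter-four distance-regular graph with $k_4=1$ is an antipodal $2$-cover, and since its intersection array, hence the tightness condition \eqref{eq0}, is determined by the spectrum, it agrees with $\mathrm{AT4}(q^3-q^2-q,q,2)$; this identifies $\Delta_2(x)$ as that graph.

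For necessity I would assume $\Delta_2(x)$ is antipodal distance-regular of diameter four, so it has exactly five distinct eigenvalues and we are in one of the cases $p\in\{q^3-2q,\ q-2,\ q^2-2q\}$ isolated above. The value $p=q^2-2q$ I would eliminate by the antipodal-quotient count: here the ``even'' eigenvalues $pq^2,p,-q^2$ contribute $1+m_1+m_3$ to the quotient, so the covering index $r'=k_2/(1+m_1+m_3)$ must be an integer $\ge 2$, which a short computation shows fails (for $q=3$ it gives $160/56$). The value $p=q-2$ is compatible with $r=2$ only when $q=3$, namely the sporadic set $(1,3,2)$, which I would treat separately, either by nonexistence of the putative cover or by a direct spectral-excess check. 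What survives is $p=q^3-2q$, as required.

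The step I expect to be the main obstacle is upgrading ``correct spectrum'' to ``distance-regular'' in the sufficiency direction: the spectrum \eqref{eq spec AT4(p,q,2)} with $m_1=0$ is necessary but far from sufficient for distance-regularity, so everything rests on forcing equality in the spectral excess theorem. This hinges on the two genuinely nontrivial inputs, the spectral identity $p_4(a_2)=1$ and the combinatorial lower bound $k_4(z)\ge 1$ furnished by the antipode map at \emph{every} vertex; securing the latter uniformly, and disposing cleanly of the degenerate small cases in the necessity direction, is where the real care is needed.
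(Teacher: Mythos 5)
Your overall strategy coincides with the paper's: for sufficiency, use the antipode map to get $k_4(v)\ge 1$ at every vertex of $\Delta_2(x)$, verify $p_4(\kappa)=1$, and force equality in the spectral excess theorem; for necessity, argue that five distinct eigenvalues kills one of the multiplicities and isolate $m_1=0$. Two points of difference deserve comment. First, you are actually \emph{more} careful than the paper in the necessity direction: the paper passes directly from ``five distinct eigenvalues'' to ``some $m_i=0$,'' which silently assumes the six values in \eqref{eq spec AT4(p,q,2)} are pairwise distinct, and you correctly observe that the only possible collision is $p+q-q^2=-q$, i.e.\ $p=q^2-2q$ --- a case that is genuinely realizable (Meixner1 $=\mathrm{AT4}(8,4,2)$). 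Your proposed fix, that the antipodal quotient would have to have $1+m_1+m_3$ vertices so that $k_2/(1+m_1+m_3)$ must be an integer, is the right tool; but you only check it at $q=3$, where $\mathrm{AT4}(3,3,2)$ is not even feasible (the quantity $pq(p+q)/r=27$ is odd, violating Lemma \ref{pps1}(iv)(1)). The computation does go through in general --- one can show the ratio equals $2+2\bigl[(q^2-q-1)^2-1\bigr]/\bigl[1+(q^2-q-1)(q^3-2q^2+2)\bigr]$, which lies strictly between $2$ and $3$ for all $q\ge 3$ --- but this verification is the substance of the step and cannot be left as ``a short computation shows fails.''

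Second, your treatment of the case $m_3=0$, i.e.\ $p=q-2$, has a genuine gap. You assert that $p=q-2$ with $r=2$ is compatible only with $q=3$, but none of the stated feasibility conditions forces this: Lemma \ref{pps2}(iv) admits $\mathrm{AT4}(q-2,q,2)$ a priori for every $q\ge 4$ (indeed B2, B6, B11 of Table \ref{table1} are exactly such parameter sets), so the case does not reduce to the single triple $(1,3,2)$, and ``treat separately'' is a placeholder rather than an argument. The paper disposes of this case by citing the nonexistence theorem of Gavrilyuk--Makhnev--Paduchikh \cite[Theorem 2]{g13}, which rules out $\mathrm{AT4}(q-2,q,2)$ altogether; some such external input (or a direct argument) is needed here. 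The sufficiency half of your proposal is sound and matches the paper's, including the identification of the intersection array via Lemma \ref{lem antipodal quo} once antipodality with $r=2$ is established.
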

\begin{proof}
Abbreviate $\Delta_2=\Delta_2(x)$.
Suppose that $\Delta_2$ is an antipodal distance-regular graph with diameter four.
Then $\Delta_2$ has precisely five distinct eigenvalues, which implies that one of $m_i$ $(i=1,2,3)$ in \eqref{eq spec AT4(p,q,2)} must be zero. 
We claim $m_1=0$.
As we saw in Remark \ref{rmk multi}, $m_2>0$.
If $m_3=0$, by \eqref{scalar m3} and since $p\geq 1$ and $q\geq 2$ we have $p=q-2$; this is a contradiction to \cite[Theorem 2]{g13}.
Therefore, we need to have $m_1=0$, as claimed.
Then, by \eqref{scalar m1} we have $p=q^3-2q$.

Conversely, suppose that $p=q^3-2q$.
Then, from \eqref{scalar m1}--\eqref{scalar m3} we find that $m_1=0$ and each $m_i$ $(i=2,3)$ is nonzero. 
By this and Lemma \ref{lemeig}, $\Delta_2$ has precisely five distinct eigenvalues.
It follows that $\Delta_2$ has diameter at most $4$.
Next, recall the vertex set $\Gamma_2=\Gamma_2(x)$ of $\Delta_2$ and pick a vertex $v\in \Gamma_2$.
Let $k_4(v)$ denote the number of vertices in $\Gamma_2$ at distance $4$ from $v$.
We claim that $k_4(v)\geq 1$.
Consider the antipodal vertex $u \in \Gamma_2$ of $v$.
Then $\partial(u,v)=4$ in $\Gamma$, which implies that the distance between $u$ and $v$ in $\Delta_2$ is at least $4$.
However, since the diameter of $\Delta_2$ is at most $4$, the distance between $u$ and $v$ in $\Delta_2$ must be $4$.
From this, we find that $\Delta_2$ has diameter $4$ and $k_4(v)\geq 1$, as claimed.

We apply Lemma \ref{spectral excess thm} to $\Delta_2$, and then apply the above claim to the right-hand side of \eqref{ineq xp.ex.thm} to get
\begin{equation}\label{eq sp.ex.thm}
	n-p_4(\kappa)\leq n \left[ \sum_{v\in \Gamma_2} \frac{1}{n-k_4(v)} \right]^{-1} \leq n-1,
\end{equation}
where $n=|\Gamma_2|$ and $\kappa$ is the valency of $\Delta_2$.
Calculate $p_4(\kappa)$ using \cite[Section 6]{van2008} with \eqref{eq spec AT4(p,q,2)}.
Then we find $p_4(\kappa)=1$.
By this, the equality in \eqref{eq sp.ex.thm} holds.
Therefore, by Lemma \ref{spectral excess thm} we have that $\Delta_2$ is distance-regular.
Moreover, $k_4(v)=1$ for all $v\in \Gamma_2$, and hence $\Delta_2$ is an antipodal $2$-cover with diameter $4$.

Next, we show that $\Delta_2$ is an antipodal tight graph $\mathrm{AT4}(q^3-q^2-q,q,2)$ when $p=q^3-2q$.
Recall the spectrum of $\Delta_2$ from \eqref{eq spec AT4(p,q,2)}.
Since $m_1=0$, for notational convenience we denote the eigenvalues of $\Delta_2$ by
\begin{equation}\label{eq eig Delta_2}
	\theta_0:= pq^2 	\ > \
	\theta_1:= pq 		\ > \
	\theta_2:= p+q-q^2 	\ > \
	\theta_3:= -q 		\ > \
	\theta_4:= -q^2.
\end{equation}
Since $\Delta_2$ is antipodal distance-regular with diameter $4$, express $a_1$ and $c_2$ in \eqref{pps3 eq(2)} in terms of $q$ using \eqref{eq eig Delta_2} and $p=q^3-2q$.
Using \eqref{pps3 eq(1)} together with parameters $k=\theta_0$, $a_1$, $c_2$, $r=2$, we find the intersection array of $\Delta_2$:
\begin{equation}\label{int num D_2}
\begin{split}
	& \left\{q^3(q^2-2),(q-1)^3(q+1)^2,\frac{q^3(q-1)}{2},1; \right.\\
	& \qquad \qquad \left. 1,\frac{q^3(q-1)}{2},(q-1)^3(q+1)^2,q^3(q^2-2)\right\}.
\end{split}
\end{equation}
Compare \eqref{int num D_2} with the intersection array in \cite[Theorem 5.4(ii)]{jk2002}.
The result follows.
\end{proof}

\begin{cor}\label{cor nonexi}
If $\mathrm{AT4}(q^3-q^2-q,q,2)$ does not exist, neither does $\mathrm{AT4}(q^3-2q,q,2)$.
\end{cor}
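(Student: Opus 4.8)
The plan is to argue by contraposition, replacing the stated implication by its logically equivalent form: if $\mathrm{AT4}(q^3-2q,q,2)$ exists, then $\mathrm{AT4}(q^3-q^2-q,q,2)$ exists. This reduces the entire corollary to a single application of Theorem \ref{thm4.2}, whose ``moreover'' clause already produces exactly the graph we need.

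First I would suppose, toward the contrapositive, that an $\mathrm{AT4}(q^3-2q,q,2)$ graph exists; call it $\Gamma$ and fix a vertex $x\in V(\Gamma)$. The governing parameter is $p=q^3-2q$, which is precisely the value distinguished in Theorem \ref{thm4.2}. Applying that theorem, the second subconstituent $\Delta_2(x)$ of $\Gamma$ is an antipodal distance-regular graph of diameter four, and the final sentence of the theorem identifies it concretely as an $\mathrm{AT4}(q^3-q^2-q,q,2)$ graph. Since $\Delta_2(x)$ is an honest induced subgraph of $\Gamma$, this exhibits an actual realization of the parameter set $(q^3-q^2-q,q,2)$, so $\mathrm{AT4}(q^3-q^2-q,q,2)$ exists.

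Reading the two implications in contrapositive form then yields the statement of the corollary: if no $\mathrm{AT4}(q^3-q^2-q,q,2)$ graph exists, there can be no $\mathrm{AT4}(q^3-2q,q,2)$ graph either, because the existence of the latter would manufacture the former as its second subconstituent.

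I expect essentially no obstacle here; the substantive work is entirely contained in Theorem \ref{thm4.2}, and this corollary simply repackages its conclusion as a nonexistence transfer. The only point meriting a moment of care is the observation that a graph arising as a subconstituent is a genuine witness to its parameters, which is immediate since $\Delta_2(x)$ is by construction an induced subgraph of the assumed graph $\Gamma$.
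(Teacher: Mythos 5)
Your argument is correct and is exactly the paper's (the paper simply states that the corollary follows directly from Theorem \ref{thm4.2}); the contrapositive reading you spell out, with $\Delta_2(x)$ serving as the witness for the parameter set $(q^3-q^2-q,q,2)$, is the intended and only substantive content.
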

\begin{proof}
It directly follows from Theorem \ref{thm4.2}.
\end{proof}

\begin{cor}\label{cornonext}
Neither $\mathrm{AT4}(q^3-q^2-q,q,2)$ nor $\mathrm{AT4}(q^3-2q,q,2)$ exists when $q\equiv3~(\mathrm{mod}~4)$.
\end{cor}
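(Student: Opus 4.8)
The plan is to deduce the statement from Corollary \ref{cor nonexi} together with the integrality/parity feasibility condition of Lemma \ref{pps1}(iv)(1). By Corollary \ref{cor nonexi} it suffices to prove that $\mathrm{AT4}(q^3-q^2-q,q,2)$ does not exist when $q\equiv 3 \pmod{4}$; the nonexistence of $\mathrm{AT4}(q^3-2q,q,2)$ then follows at once. Passing to this second subconstituent is essential rather than cosmetic: as I note at the end, the same parity count applied directly to $\mathrm{AT4}(q^3-2q,q,2)$ yields an even number and hence no obstruction, which is precisely why Theorem \ref{thm4.2} is needed.

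First I would instantiate the feasibility condition for $\Gamma=\mathrm{AT4}(q^3-q^2-q,q,2)$, i.e.\ with $p=q^3-q^2-q=q(q^2-q-1)$ and $r=2$. Lemma \ref{pps1}(iv)(1) asserts that $pq(p+q)/r$ is an even integer (equivalently, by Lemma \ref{lem thm3.1}(ii), that $c_2c'=pq(p+q)/r$ is even, where $c'=p$ is the relevant local strongly regular parameter). Using $p+q=q^2(q-1)$ I would simplify $pq(p+q)/r = q^4(q-1)(q^2-q-1)/2$.

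Next I would determine the parity of this quantity via its $2$-adic valuation under the hypothesis $q\equiv 3 \pmod{4}$. Here $q$ is odd, so $q^4$ and $q^2-q-1$ are both odd (the latter because $q^2-q$ is even), while $q-1\equiv 2\pmod{4}$ contributes exactly one factor of $2$. Hence the numerator $q^4(q-1)(q^2-q-1)$ is divisible by $2$ but not by $4$, so $pq(p+q)/r$ is odd. This contradicts Lemma \ref{pps1}(iv)(1), proving that $\mathrm{AT4}(q^3-q^2-q,q,2)$ cannot exist; Corollary \ref{cor nonexi} then rules out $\mathrm{AT4}(q^3-2q,q,2)$ as well.

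Since every step is an elementary divisibility computation, I do not anticipate a genuine technical obstacle; the one point deserving care --- and the real idea of the argument --- is the reduction itself. Repeating the parity count for $\mathrm{AT4}(q^3-2q,q,2)$ gives $pq(p+q)/r = q^3(q^2-1)(q^2-2)/2$, whose numerator is divisible by $4$ because $q+1\equiv 0\pmod{4}$ forces the $2$-adic valuation of $q^2-1$ to be at least $3$; thus this quantity is even and the parity condition gives nothing. The content of the corollary is therefore that Theorem \ref{thm4.2} transports the problem from a graph on which the obstruction is silent to one on which it is decisive.
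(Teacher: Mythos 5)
Your proposal is correct and follows essentially the same route as the paper: reduce to $\mathrm{AT4}(q^3-q^2-q,q,2)$ via Corollary \ref{cor nonexi}, then derive a parity contradiction from the quantity $q^4(q-1)(q^2-q-1)/2$ being odd when $q\equiv 3\pmod 4$. The paper phrases the obstruction as $c_2c'$ being even (Lemma \ref{lem thm3.1}(ii)) while you invoke the equivalent condition $2\mid pq(p+q)/r$ from Lemma \ref{pps1}(iv)(1), but since $c_2c'=q(p+q)p/r$ these are the same computation.
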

\begin{proof}
By Corollary \ref{cor nonexi}, it suffices to show that $\mathrm{AT4}(q^3-q^2-q,q,2)$ does not exist for $q\equiv3~(\mathrm{mod}~4)$.
Suppose that there exists $\Gamma=\mathrm{AT4}(q^3-q^2-q,q,2)$, where $q\equiv3~(\mathrm{mod}~4)$.
By Lemma \ref{pps2}(ii), a local graph of $\Gamma$ is strongly regular with parameters $(n',k',a',c')$, where we notice that $c'= q^3-q^2-q$.
By \eqref{int num D_2}, the intersection number $c_2$ of $\Gamma$  is $q^3(q-1)/2$.
Thus, $c_2c'=q^3(q-1)(q^3-q^2-q)/2$.
This quantity is odd since $q\equiv3~(\mathrm{mod}~4)$, which contradicts Lemma \ref{lem thm3.1}(ii).
Therefore, such a graph $\Gamma$ does not exist.
\end{proof}

We have some comments.
Juri\v{s}i\'{c} presented a table that lists some known examples  and open cases of the $\mathrm{AT}4$ family; see \cite[Table 2]{j2003}.
We improve this table by using the results of the present paper and the result in \cite{g13} as follows.
By \cite[Theorem 2]{g13} B2, B6, and B11 in \cite[Table $2$]{j2003} are ruled out.
Since A10 and B8 in \cite[Table $2$]{j2003} satisfy the equality in Theorem \ref{thmcon}, their $\mu$-graphs are strongly regular; however, B8 should be ruled out by Corollary \ref{cornonext}.
Note that the smallest eigenvalue of the $\mu$-graph of $\mathrm{AT}4(p,q,r)$ is $-q$ by Lemma \ref{eig H subgraph}. 
By this note, we find that the $\mu$-graph of B4 in \cite[Table $2$]{j2003} cannot be $K_{9,9}$, and the $\mu$-graph of B5 in \cite[Table $2$]{j2003} cannot be $2\cdot K_{8,8}$.
Based on these comments above, \cite[Table $2$]{j2003} has been updated; see the new version, Table \ref{table1}.

\begin{table}
\caption{The AT4 family, $\alpha=(p+q)/r$, $c_2=q\alpha$.} \label{table1}
\medskip
(a) \textrm{Known examples, where ``!'' indicates the uniqueness of the corresponding graph.}
\begin{center}
\begin{tabular}{l l r r r r r r l}
\hline 
\# & graph & $k$ & $p$ & $q$ & $r$ & $\alpha$ & $c_2$ & $\mu$-graph \\ 
\hline \hline
A1 & ! Conway-Smith & 10 & 1 & 2  &  3    & 1 & 2 & $K_2$ \\
A2 & ! J(8,4)  & 16  & 2  & 2  &  2    & 2 & 4 & $K_{2,2}$ \\
A3   & ! halved 8-cube  &  28 & 4  & 2  &   2   & 3 & 6 & $K_{3\times 2}$ \\
A4  & ! $3.O_6^-(3)$  & 45  & 3  & 3  &  3    & 2 & 6 & $K_{3,3}$ \\
A5  & ! Soicher1  & 56  & 2  & 4  & 3     &2  & 8 & $2\cdot K_{2,2}$ \\
A6 & ! $3.O_7(3)$  & 117  & 9  & 3  & 3     & 4 & 12 & $K_{4\times 3}$ \\
A7 & Meixner1  &  176 & 8  & 4  &  2    & 6 & 24 & $2\cdot K_{3\times 4}$ \\
A8  & ! Meixner2  & 176  & 8  & 4  &  4    & 3 & 12 & $K_{3\times 4}$ \\
A9 &Soicher2   & 416  & 20  & 4  &  3    & 8 & 32 & $\overline{K_2}$-ext. of $\frac{1}{2}Q_5$ \\
A10 & $3.Fi_{24}^-$  & 31671  & 351  & 9  &  3    & 120 & 1080 & ${\rm SRG}(1080,351,126,108)$ \\ \hline
\end{tabular}
\end{center}

(b) \textrm{Remaining open cases of small members of the $\mathrm{AT4}$ family on at most $4096$ vertices (with valency $k \leq 416$) and some ideas for their $\mu$-graphs (whose valency is $p$).}
\begin{center}
\begin{tabular}{l l r r r r r r l}
\hline
\# & graph \hspace{1.8cm}{ } & \hspace{0.6cm}{ } $k$ &  \hspace{0.1cm}{ } $p$ & $q$  & $r$ &  \hspace{0.2cm}{ } $\alpha$ & \hspace{0.4cm}{ } $c_2$ & $\mu$-graph \hspace{2.5cm}{ } \\ \hline \hline
B1 &  & 96 & 4 & 4  &  2    & 4 & 16 & $2\cdot K_{4,4}$ \\
B2 & does not exist & 115  & 3  & 5  &  2    & 4 & 20 & $2\cdot\rm Petersen$ \\
B3   &  &  115 & 3  & 5  &   4   & 2 & 10 &Petersen \\
B4  &   & 117  & 9  & 3  &  2    & 6 & 18 & unknown; not $K_{9,9}$ \\
B5  &  & 176  & 8  & 4  & 3     &4  & 16 & unknown; not $2\cdot K_{8,8}$\\
B6 & does not exist  & 204  & 4  & 6  & 2     & 5 & 30 & $5\cdot K_{3\times 2}$ \\
B7 &   &  204 & 4  & 6  &  5    & 2 & 12 & $2\cdot K_{3\times 2}$ \\
B8& does not exist &261&21  &3  &2    &12 &36  & ${\rm SRG}(36,21,12,12)$\\
B9 &   & 288  & 6  & 6  &  2    & 6 & 36 & $3\cdot K_{6,6}$ \\
B10 &   &  288 & 6  & 6  &  3    & 4 & 24 & $2\cdot K_{6,6}$ \\
B11  & does not exist  & 329  & 5  & 7  &  2    & 6 & 42 & $7\cdot K_{6}$ \\
B12 &   & 336  & 16  & 4  &  2    & 10 & 40 & $2\cdot K_{5\times 4}$ \\
B13 &   & 416  & 20  & 4  &  2   & 12 & 48  & $2\cdot K_{6\times 4}$ \\ \hline
\end{tabular}
\end{center}
\end{table}


\medskip
We finish this section with a comment.

\begin{thm}
Let $\Gamma$ denote an antipodal tight graph $\mathrm{AT4}(p,3,r)$.
Then $\Gamma$ must be one of the following {\rm(i)} -- {\rm(iii)}:
\begin{enumerate}[\normalfont(i)]
	\item $\mathrm{AT4}(3,3,3)$, that is $\Gamma$ is the $3.O^-_6(3)$ graph; {\rm cf. \cite[Section 13.2C]{BCN}}.
	\item $\mathrm{AT4}(9,3,3)$, that is $\Gamma$ is the $3.O_7(3)$ graph; {\rm cf. \cite[Section 13.2D]{BCN}}.
	\item $\mathrm{AT4}(9,3,2)$.
\end{enumerate}
\end{thm}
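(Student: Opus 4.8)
The plan is to specialize $q=3$, reduce the feasible triples $(p,3,r)$ to a short explicit list by means of the arithmetic constraints already established, and then eliminate the extraneous triples one at a time. First I would bound $p$: by Corollary \ref{corp} we have $p\le q^{3}-2q=21$, and since the local graph is strongly regular with parameter $a'=2p-q\ge 0$ by Lemma \ref{pps1}(ii), we also get $p\ge 2$. The divisibility conditions in Lemma \ref{pps1}(iv)(3) now read $(p+3)\mid 72$ and $(p+9)\mid 3960$; scanning the divisors of $72$ with $5\le p+3\le 24$ and discarding those for which $p+9\nmid 3960$ leaves $p\in\{3,6,9,15,21\}$.

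Next I would pin down the admissible $r$ for each such $p$, using $r\ge 2$, $r\mid p+q=p+3$ (Lemma \ref{pps1}(iv)(1)), the upper bound $r\le (p+q^{3})(p+q)^{-1}=(p+27)(p+3)^{-1}$ of Theorem \ref{thmcon}, the inequality $r(p+1)\le q(p+q)=3(p+3)$, and the requirement that $pq(p+q)/r=3p(p+3)/r$ be even. A direct case-by-case check produces the candidates
\begin{equation*}
	(p,q,r)\in\{(3,3,3),\,(6,3,3),\,(9,3,2),\,(9,3,3),\,(21,3,2)\};
\end{equation*}
here $p=15$ drops out because its only divisor $r=2$ permitted by Theorem \ref{thmcon} violates the parity condition.

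It then remains to discard $(6,3,3)$ and $(21,3,2)$. Since $(21,3,2)=\mathrm{AT4}(q^{3}-2q,q,2)$ with $q=3\equiv 3\ (\mathrm{mod}\ 4)$, it is ruled out by Corollary \ref{cornonext}. The triple $(6,3,3)$ is the member $\mathrm{AT4}(qs,q,q)$ with $q=3$ and $s=2$, and I would exclude it by appealing to the classification of this family in \cite{jk2011}, according to which the only $\mathrm{AT4}(3s,3,3)$ graphs are $3.O^-_6(3)$ (with $s=1$) and $3.O_7(3)$ (with $s=3$). The three surviving triples $(3,3,3)$, $(9,3,3)$, $(9,3,2)$ then yield cases (i)--(iii), the identifications of the first two with $3.O^-_6(3)$ and $3.O_7(3)$ following from the respective uniqueness results (cf. \cite[Section 13.2]{BCN}).

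The step I expect to be the real obstacle is the elimination of $(6,3,3)$: it satisfies every elementary condition in Lemma \ref{pps1}, since both its putative local graph $\mathrm{SRG}(81,24,9,6)$ and its $\mu$-graph $K_{3\times 3}=\mathrm{SRG}(9,6,3,6)$ are numerically feasible, so none of the new tools of this paper applies and one must invoke the external classification \cite{jk2011}. By contrast, the enumeration in the first two steps is routine, though it must be carried out with care so that no admissible divisor $r$ is overlooked.
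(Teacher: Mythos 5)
Your proof is correct, and its two decisive eliminations coincide with the paper's: $\mathrm{AT4}(21,3,2)$ is killed by Corollary \ref{cornonext}, and the remaining $r=3$ candidates are settled by the classification of $\mathrm{AT4}(qs,q,q)$ in \cite{jk2011} (which is indeed the only way to dispose of $(6,3,3)$ --- your diagnosis of that as the unavoidable external input matches the paper exactly). The one place you diverge is the reduction to a finite candidate list: the paper gets $r\in\{2,3\}$ immediately from Lemma \ref{pps2} (for $q=3$ its cases force $r=q=3$ or $r\le q-1=2$) and then, for $r=2$, combines $p\le 21$ from Corollary \ref{corp} with the tabulated feasible parameters of Table \ref{table1}, whereas you re-derive everything from the divisibility and parity conditions of Lemma \ref{pps1}(iv) together with the bound of Theorem \ref{thmcon}. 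Your version is more self-contained (the paper's appeal to ``all possible parameters are listed in Table \ref{table1}'' leaves the arithmetic implicit), at the cost of a longer case check; I verified your enumeration ($p\in\{3,6,9,15,21\}$, with $p=3$ and $p=15$ failing the parity of $pq(p+q)/r$ for their only admissible $r$) and it is accurate.
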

\begin{proof}
By Lemma \ref{pps2}, we have $r=2$ or $r=3$.
If $r=3$, $\Gamma$ is either $\mathrm{AT4}(3,3,3)$ or $\mathrm{AT4}(9,3,3)$ by \cite[Theorem 5.1]{jk2011}.
The uniqueness of $3.O_7(3)$ refers to \cite{JK manuscript}.
If $r=2$, then $p\leq 21$ by Corollary \ref{corp}.
All possible parameters are listed in Table \ref{table1}, in which $\mathrm{AT4}(9,3,2)$ is the unique open case.
\end{proof}

\section{Case $r=(p+q^3)(p+q)^{-1}$}\label{sec: case r}
In Section \ref{sec:NFC}, we gave a new feasibility condition \eqref{eqcon} for the $\mathrm{AT4}(p,q,r)$ family. 
In this section, we give some comments on the graphs $\mathrm{AT4}(p,q,r)$ when equality holds in \eqref{eqcon}, that is, $r=(p+q^3)(p+q)^{-1}$. 
Let $\Gamma$ denote an antipodal tight graph $\mathrm{AT4}(p,q,r)$ with $r=(p+q^3)(p+q)^{-1}$. 
Recall $r \geq 2$.
If $r=2$, then we have $p=q^3-2q$. 
It turns out that $\Gamma$ is $\mathrm{AT4}(q^3-2q,q,2)$, which has been treated in Section \ref{sec:AT4 p,q,2}.
Assume that $r>2$.
Then we have the following feasibility condition for $\Gamma$.
\begin{lem}\label{lem1}
With the above notation, we have $(q+r)|r(r-2)(r-1)^2(r^2-r-1)$.
\end{lem}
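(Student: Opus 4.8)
The plan is to use the equality case $r=(p+q^3)(p+q)^{-1}$ to express the parameter $p$ in terms of $q$ and $r$, and then to feed this relation into the known integrality/divisibility constraints for the $\mathrm{AT4}$ family. First I would solve $r(p+q)=p+q^3$ for $p$, obtaining $p=q(q^2-r)/(r-1)$. Since $p$ must be a positive integer and $r\geq 2$, this already forces $(r-1)\mid q(q^2-r)$; writing $q^2-r=(q^2-1)-(r-1)$ shows $(r-1)\mid q(q^2-1)$, but more usefully it pins down $p$ as an explicit rational function of $q$ and $r$ that I can substitute everywhere below.

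The key step is then to invoke the second divisibility condition in Lemma \ref{pps1}(iv)(3), namely $(p+q^2)\mid q^2(q^2-1)(q^2+q-1)(q+2)$, together with the structural divisibility $r\mid p+q$ from Lemma \ref{pps1}(iv)(1). Substituting $p=q(q^2-r)/(r-1)$ into $p+q^2$ gives $p+q^2=(q^3-qr+q^2 r-q^2)/(r-1)=q(q^2-r+qr-q)/(r-1)$; the factor of interest should simplify so that $p+q^2$ is a constant times $(q+r)/(r-1)$ or similar, at which point the divisibility $(p+q^2)\mid q^2(q^2-1)(q^2+q-1)(q+2)$ translates into a congruence with modulus divisible by $q+r$. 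I would carry out this substitution carefully, clear the denominator $r-1$, and reduce the right-hand side modulo $q+r$ (equivalently, set $q\equiv -r$). Under $q\equiv -r\pmod{q+r}$ the product $q^2(q^2-1)(q^2+q-1)(q+2)$ becomes $r^2(r^2-1)(r^2-r-1)(-r+2)=-r^2(r-1)(r+1)(r^2-r-1)(r-2)$, and tracking the contribution of the cleared factor $(r-1)$ is exactly what yields the stated product $r(r-2)(r-1)^2(r^2-r-1)$.

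The main obstacle I anticipate is the bookkeeping around the denominator $r-1$: because $p$ is only defined over $\mathbb{Z}$ after dividing by $r-1$, I must be careful that the divisibility statement $(p+q^2)\mid N$ is converted to an integer divisibility by $(q+r)$ on the nose rather than up to spurious factors of $r-1$. Concretely, I expect $p+q^2$ to equal $q(q+r)(q-1)/(r-1)$ or a close variant, so that $(q+r)\mid (r-1)(p+q^2)$ and hence $(q+r)$ divides $(r-1)$ times the right-hand side; reducing $q\equiv -r$ then gives the asserted divisibility after accounting for one further factor of $(r-1)$, which is why the exponent $2$ appears on $(r-1)$ in $r(r-2)(r-1)^2(r^2-r-1)$. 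I would verify the precise power of $(r-1)$ by a short gcd argument, checking that no cancellation against $q+r$ occurs since $\gcd(q+r,r-1)$ divides $q+1$, and this is the one place where a careless step could drop or introduce a factor.
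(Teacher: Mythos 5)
Your overall strategy is exactly the paper's: solve $r=(p+q^3)(p+q)^{-1}$ for $p=q(q^2-r)/(r-1)$, substitute into the condition $(p+q^2)\mid q^2(q^2-1)(q^2+q-1)(q+2)$ from Lemma \ref{pps1}(iv)(3), and read off the conclusion by reducing modulo $q+r$ (i.e.\ setting $q\equiv -r$). Your computation $p+q^2=q(q-1)(q+r)/(r-1)$ is also correct. However, the concrete reduction you describe does not reach the stated bound. From $(q+r)\mid(r-1)(p+q^2)$ and $(p+q^2)\mid N$ with $N=q^2(q^2-1)(q^2+q-1)(q+2)$, you conclude $(q+r)\mid(r-1)N$, and reducing $(r-1)N$ at $q\equiv-r$ gives
\[
(r-1)\cdot\bigl(-r^2(r-1)(r+1)(r^2-r-1)(r-2)\bigr)=-r^2(r+1)(r-2)(r-1)^2(r^2-r-1),
\]
which carries the extra factor $r(r+1)$ compared with the claimed $r(r-2)(r-1)^2(r^2-r-1)$. "Tracking the contribution of the cleared factor $(r-1)$" does not account for this discrepancy, and since $r\equiv-q$ and $r+1\equiv 1-q \pmod{q+r}$, there is no general reason these extra factors can be stripped off. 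So as written you only prove a strictly weaker divisibility than the lemma asserts.

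The fix is to cancel the common factor $q(q-1)$ \emph{before} reducing, which is what the paper does implicitly. Write $N=q(q-1)\cdot q(q+1)(q^2+q-1)(q+2)$ and $(r-1)(p+q^2)=q(q-1)(q+r)$. Then $(p+q^2)\mid N$ is equivalent to $q(q-1)(q+r)\mid (r-1)\,q(q-1)\cdot q(q+1)(q^2+q-1)(q+2)$, and cancelling the nonzero factor $q(q-1)$ from both sides yields $(q+r)\mid (r-1)q(q+1)(q+2)(q^2+q-1)$. Only now reduce at $q\equiv-r$: the right-hand side becomes $(r-1)(-r)(1-r)(2-r)(r^2-r-1)=-r(r-2)(r-1)^2(r^2-r-1)$, which is exactly the statement. (The factor $q(q-1)\equiv r(r+1)$ that you failed to cancel is precisely the surplus in your version.) Your side remarks — the gcd observation and the deduction $(r-1)\mid q(q^2-1)$ from integrality of $p$ — are correct but not needed.
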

\begin{proof}
Solve the equation $r=(p+q^3)(p+q)^{-1}$ for $p$ to get $p=(q^3-rq)(r-1)^{-1}$.
By Lemma \ref{pps1}(iv)(3), we know that 
\begin{equation}\label{eq:lem1 pf}
	(p+q^2)|q^2(q^2-1)(q^2+q-1)(q+2).
\end{equation} 
Substitute $p=(q^3-rq)(r-1)^{-1}$ in \eqref{eq:lem1 pf} and simplify the result to get 
$$
	(q+r)|(r-1)q(q+1)(q+2)(q^2+q-1).
$$
Set $h(q):=(r-1)q(q+1)(q+2)(q^2+q-1)$.
Then there exist polynomials $f(q)$ and $g(r)$ such that $h(q)=f(q)(q+r)+g(r)$.
By this comment, we find that $(q+r)|h(q)$ if and only if $(q+r)|g(r)$.
Put $q=-r$ in $h(q)=f(q)(q+r)+g(r)$ to get $g(r)$.
The result follows.
\end{proof}

\begin{cor}
For $r>2$, the set of feasible parameters $\{(p,q) : r=(p+q^3)(p+q)^{-1}\}$ is finite.
\end{cor}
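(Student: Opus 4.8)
The plan is to show that the constraint from Lemma~\ref{lem1} forces $q+r$ to be bounded, and then to argue that each admissible pair $(q,r)$ yields at most one value of $p$, so that the overall parameter set is finite. First I would fix $r>2$ and treat $r$ as a parameter, viewing Lemma~\ref{lem1} as a divisibility condition $(q+r)\mid g(r)$ where $g(r)=r(r-2)(r-1)^2(r^2-r-1)$. The key observation is that for each fixed $r$, the quantity $g(r)$ is a fixed nonzero integer (nonzero because $r\geq 3$ makes each factor nonzero), so the divisor $q+r$ can take only finitely many positive values; since $q\geq 2$ we have $q+r\geq r+2>0$, and $q+r$ must divide the fixed positive integer $g(r)$, which immediately bounds $q$ in terms of $r$. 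In particular $q+r\leq g(r)$, giving an explicit upper bound $q\leq g(r)-r$ for each fixed $r$.

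The remaining issue is that $r$ itself ranges over infinitely many integers, so bounding $q$ for each fixed $r$ is not by itself enough; I would need a uniform bound coupling $q$ and $r$. Here I would invoke Corollary~\ref{corp}, which gives $p\leq q^3-2q$, together with the defining relation $r=(p+q^3)(p+q)^{-1}$. Rearranging the relation as $p=(q^3-rq)(r-1)^{-1}$ (as in the proof of Lemma~\ref{lem1}) and requiring $p\geq 1$ forces $q^3-rq\geq r-1$, hence $r(q+1)\leq q^3+1$, which yields $r\leq (q^3+1)/(q+1)=q^2-q+1$. Thus $r$ is bounded above by a polynomial in $q$, and conversely the divisibility bound gives $q$ bounded in terms of $r$; combining the two inequalities $r\leq q^2-q+1$ and $q+r\leq g(r)$ confines $(q,r)$ to a finite region of the lattice. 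Once $(q,r)$ lies in a finite set, the relation $p=(q^3-rq)(r-1)^{-1}$ determines $p$ uniquely for each such pair, so the full set $\{(p,q):r=(p+q^3)(p+q)^{-1},\ r>2\}$ is finite.

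I expect the main obstacle to be making the two bounds genuinely close off the parameter region rather than merely bounding one variable in terms of the other. The cleanest route is to first establish $r\leq q^2-q+1$ from the positivity of $p$, which bounds $r$ polynomially in $q$; then for the reverse direction I would use that $q+r$ divides the fixed integer $g(r)$ to conclude that, once $r$ is bounded, $q$ is bounded as well. Since $r$ ranges only over the finite set $\{3,4,\dots\}$ up to whatever bound the interplay of the two inequalities permits, I would argue more carefully that these two constraints cannot both hold for arbitrarily large $q$: if $q\to\infty$ then $r\leq q^2-q+1$ allows $r$ to grow, but the divisibility $q+r\leq g(r)$ with $g$ a degree-five polynomial in $r$ must be checked to close the loop. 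The honest finiteness argument is therefore to fix $r$, bound $q$ by $g(r)-r$, and separately bound the admissible $r$ using the constraint that $p\geq 1$ and $p$ integral force $r\leq q^2-q+1$; intersecting these shows only finitely many lattice points survive. I would present this by first deriving the explicit bound on $r$ in terms of $q$, then noting that for each of the resulting finitely many residual cases the divisibility of Lemma~\ref{lem1} leaves only finitely many $q$, completing the proof.
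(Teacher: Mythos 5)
Your first paragraph is exactly the paper's proof: for fixed $r>2$, Lemma~\ref{lem1} makes $q+r$ a divisor of the fixed nonzero integer $g(r)=r(r-2)(r-1)^2(r^2-r-1)$, so only finitely many $q$ occur, and $p=(q^3-rq)(r-1)^{-1}$ is then determined. That already proves the corollary, since the statement quantifies over a fixed $r$ (``For $r>2$, the set $\{(p,q):\dots\}$''), not over the union of all $r>2$. The remaining two paragraphs address a stronger, uniform claim that the corollary does not make, and the argument there does not actually close: the two inequalities $r\leq q^2-q+1$ and $q+r\leq g(r)$ are compatible for arbitrarily large $q$ and $r$ (e.g.\ $q$ of order $r^6$ satisfies both), so they do not confine $(q,r)$ to a finite region, and your assertion that ``only finitely many lattice points survive'' is unjustified. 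You should simply drop that material; the proof of the stated corollary is complete after your first paragraph.
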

\begin{proof}
By Lemma \ref{lem1},  $q+r$ divides $r(r-2)(r-1)^2(r^2-r-1)$. 
Since the set of such $q$ is finite, the result follows.
\end{proof}

\begin{ex}
Consider an antipodal tight graph $\mathrm{AT4}(p,q,3)$.
If $(p+q^3)(p+q)^{-1}=3$, by Lemma \ref{lem1} we have $(q+3)|60$.
The possible values for $q$ are 
$$
	2, 3, 7, 9, 12, 17, 27, 57.
$$
Table \ref{table1} shows that the $\mathrm{AT}4(p,q,3)$ exists for $q = 2$, $3$, and $9$, which are $\mathrm{AT4}(1,2,3)$, $\mathrm{AT4}(9,3,3)$, and $\mathrm{AT4}(351,9,3)$, respectively.
Note that the existence for other values of $q$ is unknown.
\end{ex}

We finish the paper with the open problem.

\begin{pro}
Classify all $\mathrm{AT4}(p,q,3)$ graphs with $(p+q^3)(p+q)^{-1}=3$.
\end{pro}

\section*{Acknowledgements}

Zheng-Jiang Xia is supported by Anhui Natural Science Foundation (2108085MA01), NSFC (No.11601002), and Key Projects in Natural Science Research of Anhui Provincial Department of Education (Nos. KJ2020A0015 and KJ2018A0438). J.H. Koolen is partially supported by the National Natural Science Foundation of China (No. 12071454), Anhui Initiative in Quantum Information Technologies (No. AHY150000) and the National Key R and D Program of China (No. 2020YFA0713100).

\end{document}